\newtheorem{theorem}{Theorem}[section]
\newtheorem{corollary}[theorem]{Corollary}
\newtheorem{lemma}[theorem]{Lemma}
\newtheorem{proposition}[theorem]{Proposition}
\newtheorem{question}[theorem]{Question}
\newtheorem{conjecture}[theorem]{Conjecture}
\newtheorem{definition-proposition}[theorem]{Definition-Proposition}
\newtheorem{lemma-notation}[theorem]{Lemma-Notation}
\theoremstyle{definition}
\newtheorem{definition}[theorem]{Definition}
\newtheorem{example}[theorem]{Example}
\newtheorem{remark}[theorem]{Remark}
\newcommand\bP{\mathbb P}
\newcommand{\Z}{\mathbb{Z}}
\newcommand{\C}{\mathbb{C}}
\newcommand{\twopartdef}[4]
{
	\left\{
		\begin{array}{ll}
			#1 & \mbox{if } #2 \\
			#3 & \mbox{if } #4
		\end{array}
	\right.
}
\title{The flex divisor of a K3 surface}
\author{Valery Alexeev}
\email{valery@uga.edu}
\address{Department of Mathematics, University of Georgia, Athens GA
  30602, USA}
\author{Philip Engel}
\email{philip.engel@uga.edu}
\address{Department of Mathematics, University of Georgia, Athens GA
  30602, USA}
\begin{document}

\begin{abstract}
The {\it flex divisor} $R_{\rm flex}$ of a primitively polarized K3 surface $(X,L)$ 
is, generically, the set of all points $x\in X$ for which there exists
a pencil $V\subset |L|$ whose base locus is $\{x\}$. We show
that if $L^2=2d$ then $R_{\rm flex}\in |n_dL|$ with
$$n_d= \frac{(2d)!(2d+1)!}{d!^2(d+1)!^2} =(2d+1)C(d)^2,$$
where $C(d)$ is the Catalan number.
We also show that there is a well-defined notion of flex divisor over
the whole moduli space $F_{2d}$ of polarized K3 surfaces.
\end{abstract}
\maketitle

\section{Introduction}

Let $(X,L)$ be a primitively polarized K3 surface of degree $2d$. Recent work of the authors
on compactification of the moduli space $F_{2d}$ of such
surfaces
has highlighted the importance of a
{\it canonical choice of polarizing divisor}: An algebraically varying
choice of divisor $R\in |nL|$ on the generic polarized K3 surface.
If this choice of divisor extends over all of $F_{2d}$ then it gives rise to
a modular compactification $$F_{2d}\hookrightarrow \overline{F}_{2d}^R.$$ The compactification
is constructed by taking the closure of the space of pairs $(X,\epsilon R)$ in the moduli space
of stable slc pairs, for some small $\epsilon>0$.

By the main theorem of \cite{alexeev2021compact}, the normalization of 
$\overline{F}_{2d}^R$ is semitoroidal whenever $R$ satisfies a property
called {\it recognizability}. Thus, the search for modular toroidal compactifications
of $F_{2d}$ is intimately related to finding canonical choices
of polarizing divisor, and verifying that those choices
are recognizable.

One infinite series of divisors, ranging over all degrees $2d$, is the {\it rational curve divisor}.
On a generic K3 surface $(X,L)$ it can be defined as
$$R_{\rm rc}:=\sum_{\substack{C\in |L| \\ \textrm{rational}}}C$$
and was proven to be recognizable in \cite{alexeev2021compact}.
By the famous Yau-Zaslow formula \cite{yau1996bps, beauville1999counting},
$R_{\rm rc}\in |n_dL|$ where $n_d = [q^{d+1}]\,\prod_{n\geq 1} (1-q^n)^{-24}$.

Claire Voisin suggested to the authors a second series of divisors, which we call
here the {\it flex divisor} $R_{\rm flex}$. It was first considered by Welters
\cite{welters1981abel} for a quartic K3 surface, who called it the
curve of hyperflexes.
On the generic $(X,L)$ it is defined as the
set of all points $x\in X$ for which there exists a pencil $V\in |L|$ whose set-thereotic
base locus is $\{x\}$.
When $|L|$ defines an embedding $X\hookrightarrow \mathbb{P}^g$ with $g=d+1$,
which is the case for generic $(X,L)\in F_{2d}$ when $d\geq 2$, the flex divisor can be concretely thought
of as the set of points $x\in X\subset \mathbb{P}^g$ for which there exists a {\it flex space}: A codimension
$2$ linear subspace of $\mathbb{P}^g$ intersecting $X$ at only the point~$x$.

Our first result hints towards a positive answer on the question of whether $R_{\rm flex}$ is recognizable.
Concretely, we show:

\begin{theorem}\label{flex-exists} There is a canonical choice of divisor $R_{\rm flex}$ 
varying algebraically over all of $F_{2d}$ and giving the flex divisor on the
generic K3 surface $(X,L)$. \end{theorem}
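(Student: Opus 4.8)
The plan is to realize $R_{\rm flex}$ as the proper pushforward of a Thom--Porteous degeneracy locus built functorially from the relative jet bundles of the universal family, so that it is manifestly algebraic, makes sense fiberwise over all of $F_{2d}$, and does not require $L$ to be very ample. First I would pass to a setting with a universal family: work over the moduli stack $\mathcal F_{2d}$, or over a smooth finite cover $S\to F_{2d}$ carrying a family $\pi:\mathcal X\to S$ with relative polarization $\mathcal L$. Since $n L$ is ample and the fibers are K3 surfaces, $h^i(X,\mathcal L^{n})=0$ for $i>0$ and all $(X,L)$, so $\mathcal E:=\pi_*\mathcal L$ and $\pi_*\mathcal L^{n_d}$ are locally free of constant rank; in particular $|n_d L|$ has constant dimension. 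It therefore suffices to produce a canonical section, up to scale, of $\mathcal L^{n_d}$ on each fiber varying algebraically with $S$ -- equivalently a morphism $S\to\mathbb P(\pi_*\mathcal L^{n_d})$ -- recovering the set-theoretic flex divisor generically.

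Next I would reformulate the flex condition without reference to the embedding: $x$ is a flex point precisely when there is a pencil $V\subset|L|$ whose base scheme is a length-$2d$ subscheme $\xi$ concentrated at $x$. To parametrize the pairs $(x,\xi)$, I would form the relative tower of principal-parts/curvilinear-subscheme spaces $p:\mathcal C\to\mathcal X$, a smooth proper $S$-scheme of relative dimension $2d+1$ whose points over $x$ are the curvilinear length-$2d$ subschemes supported at $x$, carrying the universal flag $\xi_1\subset\cdots\subset\xi_{2d}$. Pulling back the jet-evaluation map gives a morphism of vector bundles $\mathrm{ev}:\mathcal E\otimes\mathcal O_{\mathcal C}\to\mathcal Q_{2d}$, where $\mathcal Q_{2d}$ has rank $2d$ with fiber $H^0(L\otimes\mathcal O_{\xi_{2d}})$ and $\ker\mathrm{ev}$ is the space of sections vanishing on $\xi_{2d}$. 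A pencil vanishing on $\xi_{2d}$ exists iff $\ker\mathrm{ev}$ has rank $\geq 2$, i.e. $\mathrm{rank}\,\mathrm{ev}\leq d$ (as $\mathrm{rk}\,\mathcal E=d+2$ for $d\geq 2$), so the flex data is the degeneracy locus $D:=\{\mathrm{rank}\,\mathrm{ev}\leq d\}$, and I would set $R_{\rm flex}:=p_*[D]$.

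By Thom--Porteous the expected codimension of $D$ in $\mathcal C$ is $(\mathrm{rk}\,\mathcal E-d)(\mathrm{rk}\,\mathcal Q_{2d}-d)=2\cdot d=2d$, so $D$ has expected dimension $1$ and $p_*[D]$ is a $1$-cycle on each fiber, i.e. a divisor; pushing forward the Porteous class -- the step where the identity $n_d=(2d+1)C(d)^2$ emerges -- shows its class is $n_d L$. Over the open locus $U\subset F_{2d}$ where $L$ is very ample and the flexes are generic, $D$ is reduced of the expected dimension and $p$ is generically injective onto its image, so $p_*[D]$ coincides with the reduced set-theoretic flex divisor of the embedded surface; this pins down $R_{\rm flex}$ as the desired extension.

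The hard part will be to guarantee, uniformly over all of $F_{2d}$, that $p_*[D]$ remains an honest effective divisor in the fixed class $n_d L$ and varies flatly -- i.e. that on no special K3 (the $d=1$ degree-$2$ case, Noether--Lefschetz loci carrying extra curves, or points where $L$ fails to be very ample) does the flex locus jump to cover all of $X$, collapse to dimension $0$, or change class. I would control this by proving that $D\to S$ is flat with fibers of pure dimension $1$, using that $\mathcal C\to S$ is smooth and proper, the constancy of $\chi(\mathcal L^{n})$, and the key geometric input that a length-$2d$ subscheme of a K3 surface concentrated at a single point can never move in a $2$-dimensional family -- equivalently that the flex locus is never all of $X$. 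Once purity and flatness hold, $[D]$ and its pushforward give a flat family of divisors in $|n_d L|$, hence a morphism $S\to\mathbb P(\pi_*\mathcal L^{n_d})$ that descends to $\mathcal F_{2d}$ and answers Theorem~\ref{flex-exists}; verifying this non-degeneration on the deepest Noether--Lefschetz strata is where the real work lies.
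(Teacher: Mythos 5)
Your construction on the generic locus is essentially a reformulation of the paper's: your degeneracy locus $D\subset\mathcal C$ (punctual length-$2d$ subschemes imposing at most $d$ conditions on $|L|$) is, away from pencils with base curves, the same locus as the paper's intersection $P_{2d}\cap i(G)$ of the punctual Hilbert scheme with the image of the Grassmannian of pencils inside $X^{[2d]}$, and your Porteous computation is a stand-in for their Chern-class computation of $\deg(i^*q_{-2d}(h)\mathbf 1)$. But two of your foundational claims fail as stated. First, the space $\mathcal C$ you posit --- smooth, proper, of relative dimension $2d+1$, parametrizing curvilinear punctual subschemes --- does not exist: the curvilinear locus is smooth but not proper, and its closure (the punctual Hilbert scheme) is proper but singular. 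The paper gets around this by using Haiman's theorem that $P_{2d}$ is reduced, irreducible and Cohen--Macaulay, which together with smoothness of $i(G)$ is exactly what makes the intersection cycle $[P_{2d}]\cdot[i(G)]=[P_{2d}\cap i(G)]$ well behaved; your Porteous class likewise needs purity/CM-ness, not smoothness you cannot have. (Haiman's result also requires the surface to be smooth, so ADE-singular K3s, which belong to $F_{2d}$ here, are outside the reach of this machinery on the fiber itself.) Second, the ``key geometric input'' that the flex locus never sweeps out all of $X$ is asserted, not proved; the paper's proof of this (Lemma \ref{not-all-flex}) is a genuine idea you are missing: every flex point has class equal to the Beauville--Voisin class $c_X$ in ${\rm CH}_0(X)$ (using torsion-freeness of ${\rm CH}_0$), so a two-dimensional flex locus would force ${\rm CH}_0(X)=\Z$, contradicting Mumford's theorem.

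The more serious gap is the extension over all of $F_{2d}$, which you correctly identify as ``where the real work lies'' but for which you offer no viable mechanism. Your plan is to prove that $D\to S$ is flat with pure one-dimensional fibers over \emph{every} point of $F_{2d}$, i.e.\ that the naive fiberwise construction never degenerates. There is no evidence this is true, and the tools you list (smoothness of $\mathcal C$, constancy of $\chi(\mathcal L^n)$) do not touch the problem on Noether--Lefschetz loci, non-very-ample loci, or singular fibers, where even the definition of $\mathcal C$ and the evaluation map becomes delicate. The paper's route is structurally different and avoids this entirely: it observes that $R_{\rm flex}$ is a \emph{constant cycle curve} (again via the Beauville--Voisin class), proves that the constant cycle property survives specialization, invokes rigidity of constant cycle curves (another consequence of Mumford's theorem) to show the flat limit along any arc is independent of the arc, and then uses normality of $F_{2d}$ to extend the classifying section of $\mathbb P(u_*\mathfrak L)$ over the bad locus. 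In particular the extended divisor is defined as a flat \emph{limit}, not by performing the flex construction on the special fibers --- which is essential, since the paper shows (quartic containing a line) that the extension contains points that are not flex in the naive sense. Without either proving your uniform flatness claim or importing a rigidity/specialization argument of this kind, your proposal does not establish the theorem.
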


The theorem is not obvious, because it is not clear if the
flex points form a subvariety of $X$ of the expected dimension, which is one.
Additionally, the flex divisor may have multiple components and
 one must determine their multiplicities. Perhaps most importantly,
sometimes points in $R_{\rm flex}$ as in Theorem \ref{flex-exists}
are not flex under the naive definition!
This occurs on a quartic surface containing a line---the points on the line are not
flex according to the naive definition because the relevant pencil $V$ contains the whole line as a base
curve. But the line appears as a component of the flex divisor, see Example \ref{deg4ex}.

The flex divisor is notably an example of a {\it constant cycle curve}
\cite{huybrechts2013curves}:
One whose points all have the same
class in the Chow group of zero-cycles ${\rm CH}_0(X)$. The method of proof of
Theorem \ref{flex-exists} suggests strongly:

\begin{conjecture} Let $R$ be a canonical choice of polarizing divisor for $F_{2d}$. If $R$
is a constant cycle curve, then it is recognizable. \end{conjecture}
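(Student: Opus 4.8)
\emph{Strategy.} I would prove the conjecture one boundary cusp at a time, using the constant cycle hypothesis to pin down the on-surface limit of $R$ through specialization of rational equivalence. Recall that $R$ is recognizable exactly when, for every one-parameter degeneration of $(X,L)$ to a Kulikov model $X_0$, the stable-pair limit divisor $R_0 = \lim_{t\to 0} R_t$ depends only on the monodromy invariant of the degeneration and not on the particular smoothing \cite{alexeev2021compact}. Since this is tested stratum by stratum, the first reduction is to the Type~II degenerations attached to the $1$-cusps: the Type~III central fibers of the $0$-cusps are built from rational surfaces, on which ${\rm CH}_0 = \mathbb Z$ and the constant cycle condition carries no information, so one expects the Type~III statement to fall out of the Type~II analysis by a further degeneration. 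For a Type~II model the central fiber is a chain of surfaces glued along a common elliptic double curve $E$, and the key invariant to control is the trace $R_0 \cap E$, a zero-cycle on $E$ of degree $N = nL_0\cdot E$.

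\emph{Specialization of the class.} The essential input is that rational equivalence specializes. After semistable reduction, Fulton's specialization homomorphism ${\rm sp}\colon {\rm CH}_0(X_\eta)\to{\rm CH}_0(X_0)$ carries the common class $c_X$ of all points of the constant cycle curve $R_\eta$ \cite{huybrechts2013curves} to a single class on $X_0$. I would first show that the stable-pair limit $R_0$ is again a constant cycle curve, i.e. every point of $R_0$ lies in the class ${\rm sp}(c_X)$. The delicate point is that the KSBA limit is not the naive flat limit: it is produced by normalization and semi-log-canonical modifications of the total space, so one must check that the support of $R_0$ still consists of specializations of points of $R_\eta$ and track the class through these birational operations.

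\emph{Rigidity of the trace.} It then remains to show that the zero-cycle $R_0\cap E$ is determined by the monodromy invariant. Here the constant cycle property should supply a rigidity statement: the family of zero-cycles cut out on the nearby elliptic curves has trivial Abel--Jacobi (normal function) image, because all the points are rationally equivalent in the total space, and the limit of such a trivial normal function is forced into the canonical position prescribed by the weight filtration of the limit mixed Hodge structure, which is exactly the data recorded by the monodromy invariant. Concretely, I expect $R_0\cap E$ to be rationally equivalent on $E$ to $N$ times a distinguished point, with the distinguished point (and any torsion ambiguity) read off from the isotropic monodromy vector; this would give family-independence and hence recognizability.

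\emph{Main obstacle.} The crux is this last rigidity. The constant cycle condition on the reducible fiber $X_0$ is weaker than constancy on each component --- the rational end-components of the chain make it vacuous there --- so the determination of $R_0\cap E$ cannot come from a pointwise argument and must instead be extracted from the limiting structure of the family, i.e. from the specialization of ${\rm CH}_0$ and of the Abel--Jacobi map through the semistable degeneration. Making this specialization explicit on $E$, ruling out spurious moduli of constant-cycle limits, and proving independence of the Kulikov model (unique only up to modifications supported on $X_0$) is where the real work lies. The flex divisor of Theorem~\ref{flex-exists} is the natural test case: there the trace $R_0\cap E$ can be computed directly from the incidence geometry of flex spaces and checked against the predicted monodromy-invariant configuration, which should both confirm the mechanism and guide the general argument.
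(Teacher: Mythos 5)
You are attempting a statement that the paper presents only as a conjecture: the authors give no proof, remarking merely that the method of proof of Theorem~\ref{flex-exists} ``suggests strongly'' that it holds. So there is no argument of theirs to compare yours against, and your text is, by its own admission, a strategy rather than a proof. Its ingredients are the natural ones, and indeed the ones the paper deploys in the smooth setting --- specialization of rational equivalence as in Lemma~\ref{specialize}, and rigidity of constant cycle curves via Mumford's theorem \cite{mumford1969rational}, \cite[Sec.~2.3]{huybrechts2013curves} --- but each of your three decisive steps is asserted rather than established.

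Concretely: first, your specialization step does not follow from Lemma~\ref{specialize}. That lemma is proved by passing to a simultaneous resolution of ADE singularities, available only when every fiber is an (ADE) K3 surface; for a Kulikov or stable-pair degeneration the central fiber is reducible, no such resolution exists, and ${\rm CH}_0(X_0)$ for a Type II/III fiber is of a completely different nature (trivial on the rational components, as you yourself note), so the claim that every point of $R_0$ lies in ${\rm sp}(c_X)$ is both harder to prove and far weaker as a constraint. Second, your ``rigidity of the trace'' step is the conjecture itself in disguise: the phrase ``the family of zero-cycles cut out on the nearby elliptic curves'' is not well-defined, since the double curve $E$ exists only in the central fiber of a chosen Kulikov model and nearby fibers are smooth K3 surfaces carrying no distinguished elliptic curve; before one can speak of a normal function with trivial Abel--Jacobi image one must construct it, and the assertion that its limit is ``forced into the canonical position prescribed by the weight filtration'' is precisely the family-independence statement to be proven, for which no argument is offered. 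Third, independence of the Kulikov model (unique only up to modifications supported in the central fiber) and the reduction of Type III to Type II are flagged as difficulties but not addressed. In short, your outline is a sensible research program consistent with the paper's toolkit, but it is not a proof; the statement remains open, in the paper and in your proposal alike.
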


A resolution of this conjecture would unify various results about recognizable divisors, such
as \cite{alexeev2019stable}, \cite{alexeev2020compactifications}, \cite{alexeev2021compact},
and \cite{alexeev2021nonsymplectic}.

Our second result is an analogue of the Yau-Zaslow formula. That is, we
determine in what multiple of the polarization the flex divisor lives,
generalizing known results in the cases $d=1,2$.

\begin{theorem} Let $(X,L)$ be a K3 surface of degree $2d$. Then $R_{\rm flex}\in |n_dL|$ with
$$n_d= \frac{(2d)!(2d+1)!}{d!^2(d+1)!^2} =(2d+1)C(d)^2,$$
where $C(d)$ is the Catalan number.
\end{theorem}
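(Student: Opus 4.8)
The plan is to reduce the statement to a single intersection number and then evaluate it on a universal curvilinear jet tower. By Theorem \ref{flex-exists} the class $[R_{\rm flex}]\in\mathrm{Pic}(X)$ is locally constant in the family, hence constant on the irreducible moduli space $F_{2d}$; so it suffices to compute $n_d$ for a generic $(X,L)$, where $\mathrm{Pic}(X)=\mathbb{Z}\cdot L$ and $|L|$ embeds $X\hookrightarrow\mathbb{P}^{d+1}$. There every effective class is a multiple of $L$, so $[R_{\rm flex}]=n_dL$ with $n_d=(R_{\rm flex}\cdot L)/L^2=(R_{\rm flex}\cdot L)/2d$; equivalently $2d\,n_d=\deg R_{\rm flex}$ is the number of flex points on a general hyperplane section. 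The first step is therefore to compute $R_{\rm flex}\cdot L$.

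I would reformulate the flex condition as a degeneracy locus. Writing $E=H^0(X,L)$, $\dim E=d+2$, a flex at $x$ means there is a pencil $V\subset E$ whose base scheme is the length-$L^2=2d$ fat point supported at $x$; generically this base scheme $Z$ is curvilinear. Restriction of $E$ to $Z$ gives a map $E\to H^0(L|_Z)$ to a $2d$-dimensional space, and the existence of the pencil is exactly the condition $\dim\ker\ge 2$, i.e.\ rank $\le d$. Let $\mathcal{C}$ be the space of length-$2d$ curvilinear subschemes of $X$, $\mathcal{J}\to\mathcal{C}$ the rank-$2d$ universal jet bundle with fiber $H^0(L|_Z)$, and $\phi\colon E\otimes\mathcal{O}_{\mathcal{C}}\to\mathcal{J}$ the evaluation. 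The flex locus is the degeneracy locus $D_d(\phi)=\{\mathrm{rk}\,\phi\le d\}$, of expected codimension $(d+2-d)(2d-d)=2d$.

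To make $\mathcal{C}$ and $\mathcal{J}$ computable, I would realize $\mathcal{C}$ as an iterated $\mathbb{P}^1$-bundle (Semple/jet) tower $X=X_0\leftarrow X_1\leftarrow\cdots\leftarrow X_{2d-1}=\mathcal{C}$, where $X_1=\mathbb{P}(T_X)$ records the tangent direction and each further level adds the next jet coefficient; thus $\dim\mathcal{C}=2+(2d-1)=2d+1$ and $D_d(\phi)$ is a curve. On this tower $\mathcal{J}$ carries a filtration with line-bundle quotients $L\otimes M_i$, $0\le i\le 2d-1$, where the $M_i$ are explicit monomials in the tautological (relative $\mathcal{O}(1)$) classes of the tower, so $c(\mathcal{J})=\prod_i\big(1+c_1(L)+\eta_i\big)$ with $\eta_i=c_1(M_i)$. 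Since $E\otimes\mathcal{O}_{\mathcal{C}}$ is trivial, Thom--Porteous then gives
$$[D_d(\phi)]=\det\begin{pmatrix} c_d(\mathcal{J}) & c_{d+1}(\mathcal{J})\\ c_{d-1}(\mathcal{J}) & c_d(\mathcal{J})\end{pmatrix}=c_d(\mathcal{J})^2-c_{d-1}(\mathcal{J})\,c_{d+1}(\mathcal{J}).$$
As the generic flex scheme is uniquely determined by its support, the projection $p\colon D_d(\phi)\to R_{\rm flex}$ is birational, whence $R_{\rm flex}\cdot L=\int_{\mathcal{C}}\big(c_d(\mathcal{J})^2-c_{d-1}(\mathcal{J})c_{d+1}(\mathcal{J})\big)\,p^*c_1(L)$.

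Finally I would evaluate this integral by pushing forward through the tower, one $\mathbb{P}^1$-bundle at a time, using that on a K3 surface all terms reduce to $L^2=2d$, $K_X=0$, and $c_2(X)=24$. The main obstacle is the combinatorial evaluation of the resulting iterated residue / symmetric-function sum in the $\eta_i$ into the closed form $(2d+1)C(d)^2$; I expect the Catalan-squared structure to emerge from the lattice-path interpretation of the Jacobi--Trudi-type determinant paired against the tautological classes. As consistency checks, for $d=1$ the formula gives $n_1=3$, matching the ramification curve $R\in|3L|$ of the double cover $X\to\mathbb{P}^2$, and for $d=2$ it gives $n_2=20$, matching Welters' curve of hyperflexes on the quartic.
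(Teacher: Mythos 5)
Your reduction is a genuinely different route from the paper's---the paper never leaves the Hilbert scheme $X^{[2d]}$: it identifies the flex cycle with $\mathrm{supp}_*[P_{2d}\cap i(G)]$, converts its degree via Nakajima operators and the Lehn--Sorger ring structure into $-\sigma_1\cdot c_{2d-1}(\mathcal{O}_G^{[2d]})$ on the Grassmannian $G=\mathrm{Gr}(g-1,g+1)$ of pencils, computes $c(\mathcal{O}_G^{[2d]})=(1-\sigma_1)^{4d+2}/(1-\sigma_1+\sigma_2)^{d+2}$ from a Koszul resolution, and finishes with Schubert calculus. That choice is precisely what makes the computation terminate: $H^*(G)$ is generated by $\sigma_1,\sigma_2$ with known top intersections, and the answer becomes an explicit double binomial sum closed by a hypergeometric identity. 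Your plan instead does intersection theory on a Semple/jet tower of dimension $2d+1$, and the step that actually produces the number---pushing the Thom--Porteous class $c_d(\mathcal{J})^2-c_{d-1}(\mathcal{J})c_{d+1}(\mathcal{J})$ through $2d-1$ successive $\mathbb{P}^1$-bundles and extracting $(2d+1)C(d)^2$---is not carried out; it is deferred with ``I expect the Catalan-squared structure to emerge.'' Since the closed formula \emph{is} the content of the theorem, this is not a technical omission but the missing heart of the proof.

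There are also two facts you assert that are not automatic and that the paper has to work to establish. First, you state that $D_d(\phi)$ is a curve, i.e.\ has the expected dimension; a priori the flex locus could be $2$-dimensional, in which case Thom--Porteous yields only a class supported on an excess-dimensional locus and your integral need not compute $\deg R_{\rm flex}$. The paper rules this out (Lemma \ref{not-all-flex}) by showing every flex point has the Beauville--Voisin class and invoking Mumford's theorem on the infinite-dimensionality of $\mathrm{CH}_0(X)$; your argument needs this input or an equivalent. Second, you need the pushforward of your degeneracy cycle to equal $R_{\rm flex}$ \emph{with its multiplicities}, where $R_{\rm flex}$ is defined as $\mathrm{supp}_*[P_{2d}\cap i(G)]$ (the paper matches scheme-theoretic intersection and intersection product via Lemma \ref{equidim-flex}, using that $P_{2d}$ is Cohen--Macaulay and $i(G)$ smooth). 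Your Thom--Porteous scheme structure lives on a different, only birational, model: the tower's boundary maps non-isomorphically onto the non-curvilinear punctual schemes, where components of the flex locus could a priori lie; moreover your claim that $p$ is birational because ``the generic flex scheme is uniquely determined by its support'' is unproved---the paper establishes only finiteness of $\mathrm{supp}$ on the flex locus, not generic injectivity. Finally, the existence of the morphism from the whole tower to $X^{[2d]}$ and the filtration of $\mathcal{J}$ by line-bundle quotients over the whole tower (not just over the open curvilinear locus, where jet filtrations are standard) both require justification. None of these is obviously fatal, but as written the proposal is an outline of a plausible alternative strategy, not a proof.
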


\vspace{-5pt}

\begin{table}[h]
\begin{tabular}{c|ccccccccc}
$d$ & $1$ & $2$ & $3$ & $4$ & $5$ & $6$ & $7$ & $8$ & $9$   \\
\hline
$n_d$ & $3$ & $20$ & $175$ & $1764$ &
$19404$ & $226512$ & $2760615$ & $34763300$ & $449141836$ \\
\end{tabular}\vspace{3pt}
\caption{Flex divisor classes}
\label{tab-values}
\end{table}

\vspace{-5pt}

Table \ref{tab-values} tabulates the first nine values of $n_d$. The value $n_1=3$
is well-known, see Example \ref{deg2ex},
while the value $n_2=20$ has been computed by various authors
\cite[Prop.~8.8]{huybrechts2013curves},
\cite[Cor.~2.4.6]{witaszek2014geometry}.

The summary of the paper is as follows: Section \ref{sec:flex-defined} shows that the flex divisor
is well-defined and extends to a divisor over all of $F_{2d}$ and Section \ref{sec:flex-degree} computes
the multiple $n_d$ of the primitive polarization in which the flex divisor lives, using
 intersection theory on the Hilbert scheme $X^{[2d]}$ of the K3 surface.

\section{Well-definedness}
\label{sec:flex-defined}

\begin{definition} We say $(X,L)$ is a {\it polarized K3 surface of degree $2d$} if $X$
is a K3 surface with ADE singularities, and $L\to X$ is a primitive, ample line bundle
satisfying $L^2=2d$. 
\end{definition}

Let $F_{2d}$ denote the moduli stack of polarized K3 surfaces over $\C$.
It is a smooth, irreducible, Deligne-Mumford stack of dimension $19$.

\begin{definition} A point $x\in X$ is {\it flex} if there exists a pencil $V\subset |L|$ whose
base locus is the singleton $\{x\}$. \end{definition}

Let $L_{K3}$ denote the unique
even, unimodular lattice of signature $(3,19)$ and fix a primitive vector $v\in L_{K3}$
 of norm $v^2=2d$. Define
\begin{align*} \mathbb{D}&:=\mathbb{P}\{x\in v^\perp\otimes \C\,\big{|}\,x\cdot x=0,\,x\cdot \overline{x}>0\} \textrm{ and} \\ 
\Gamma&:=\{\gamma\in O(L_{K3})\,\big{|}\,\gamma(v)=v\}.\end{align*}
By the Torelli theorem, the coarse space of $F_{2d}$ 
is the arithmetic quotient $\mathbb{D}/\Gamma$. 

\begin{definition} A {\it Heegner divisor} in $\mathbb{D}/\Gamma$ is the image of a hyperplane section
$w^\perp\cap \mathbb{D}$ for some vector $w\in L_{K3}\setminus \Z v$. \end{definition}

\begin{proposition} Let $S\subset F_{2d}$ denote the
polarized K3 surfaces $(X,L)$ for which there exists a pencil
$V\subset |L|$ containing a base curve. Then $S$ is contained in a 
finite union of Heegner divisors.
\end{proposition}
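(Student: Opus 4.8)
The plan is to show that membership in $S$ forces the existence of an algebraic $(1,1)$-class whose numerical invariants are uniformly bounded, and then to invoke the finiteness of Heegner divisors of bounded norm. The existence of the pencil is used only to produce a base curve; the rest is lattice theory on $\mathrm{Pic}(X)\hookrightarrow L_{K3}$.

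First I would extract a curve class from the base locus. Suppose $(X,L)\in S$, so there is a pencil $V\subset |L|$ whose base locus contains a curve; let $B$ be an irreducible component of that curve and set $C=[B]\in\mathrm{Pic}(X)$. Since $B$ is a common component of every member $D\in V$, we have $D-B\geq 0$, so $L-C$ is an effective class. It is nonzero: otherwise $L=C=[B]$ and every member of $V$ would be forced to equal $B$, contradicting $\dim V=1$. As $B$ is irreducible, $C^2\geq -2$, and since $L$ is ample with both $C$ and $L-C$ effective and nonzero, pairing against $L$ gives $1\leq L\cdot C\leq 2d-1$. A short ampleness argument shows $C$ is not proportional to $v$: if $C=kv$ then $L\cdot C=2dk$ forces $k\geq 1$, whence $L-C=(1-k)L$ is either zero (excluded above) or has nonpositive intersection with the ample class $L$ and so cannot be a nonzero effective class, a contradiction. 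Thus $C\in\mathrm{Pic}(X)\setminus\Z v$.

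Next I would translate this into a Heegner divisor condition. Because $C$ is of type $(1,1)$, the period point $\omega$ of $(X,L)$ is orthogonal to $C$; as it already lies in $\mathbb{D}\subset\mathbb{P}(v^\perp\otimes\C)$, this places $\omega$ on the Heegner divisor $C^\perp\cap\mathbb{D}$, which is a genuine Heegner divisor precisely because $C\notin\Z v$. Hence $S$ is contained in the union of the divisors $C^\perp\cap\mathbb{D}$ as $C$ ranges over the classes produced above. Writing $C_{v^\perp}=C-\tfrac{C\cdot v}{v^2}\,v$ for the projection to $v^\perp$, the divisor $C^\perp\cap\mathbb{D}$ depends only on $C_{v^\perp}$, and $C_{v^\perp}^2=C^2-(L\cdot C)^2/(2d)$ lies in the fixed interval $[\,-2-(2d-1)^2/(2d),\,0\,)$ by the bounds above.

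Finally I would prove that only finitely many divisors arise. The pair $(C^2,\,L\cdot C)$ takes finitely many values, so it suffices to show that for each fixed value the admissible vectors $C$ fall into finitely many $\Gamma$-orbits, since $\Gamma$-equivalent classes define the same Heegner divisor in $\mathbb{D}/\Gamma$. I expect this finiteness step to be the main obstacle: it rests on reduction theory for the lattice $L_{K3}$ (for instance Eichler's criterion, using that $L_{K3}=U^{\oplus 3}\oplus E_8^{\oplus 2}$ contains several hyperbolic planes, so that the $\Gamma$-orbit of a vector is governed by its norm and its image in the discriminant group), and care is needed to ensure the curve class extracted from the base locus has invariants bounded uniformly in $(X,L)$ rather than just for a single surface. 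Granting this, finitely many $(C^2,L\cdot C)$ and finitely many orbits per value yield finitely many Heegner divisors, proving the proposition.
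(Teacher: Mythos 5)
Your proposal is correct, and its geometric half (extracting the class $C=[B]$ from the base curve, the bounds $-2\le C^2$ and $1\le L\cdot C\le 2d-1$, and the exclusion $C\notin \Z v$) is the same argument as the paper's, carried out in more detail than the paper bothers to give. Where you genuinely diverge is the finiteness step. The paper does no lattice theory at all: it observes that $S$ is a closed algebraic subset of $F_{2d}$, that every point of $S$ lies on some Heegner divisor, and then concludes by the soft fact that an algebraic set over $\C$ contained in a countable union of closed subvarieties must lie in finitely many of them (each irreducible component, being irreducible over an uncountable field, cannot be covered by countably many proper closed subsets). Your route instead makes the statement effective: you bound $C_{v^\perp}^2$ in an interval depending only on $d$ --- note that the ``uniformity in $(X,L)$'' you worry about at the end is already secured by your bounds, which involve only $d$ --- and then appeal to Eichler's criterion/reduction theory for finiteness of $\Gamma$-orbits of vectors with bounded invariants. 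Both arguments work; the paper's is shorter but uses the algebraicity and closedness of $S$ (which you never need), while yours pins down explicitly which Heegner divisors can contain $S$, namely those of bounded discriminant, which is strictly more information. One small repair needed in your version: $C=[B]$ need not be primitive in $L_{K3}$ (an irreducible curve on a K3 can have divisible class when $C^2\ge 0$), and Eichler's criterion applies to primitive vectors; this is fixed by passing to the primitive part $C'$ of $C$, which defines the same hyperplane $C^\perp=C'^\perp$ and has invariants bounded by those of $C$.
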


\begin{proof} The condition that $|L|$ contain a pencil with a base curve
is an algebraic condition, which is easily seen to be closed on $F_{2d}$. 

Let $C$ be the base curve of a pencil $V\subset |L|$.
Fix $H\in V$ and note $H = C+D$ for some non-empty effective divisor $D$.
Since $L$ is ample, we have $0<L\cdot C<2d$.
Thus $[C]\notin \Z L$. By the primitivity of $L$, the rank of ${\rm Pic}(X)$
is least $2$. Hence any point of $S$ lies in some Heegner divisor. 
Since $S$ is algebraic, we conclude that
$S$ is contained in a finite union of Heegner divisors.
\end{proof}

\begin{lemma}\label{not-all-flex} Let $(X,L)$ be a polarized K3 surface.
The flex points $\{x\in X\,\big{|}\,x\textrm{ flex}\}$ form a constructible
subset of $X$ of dimension at most $1$.  \end{lemma}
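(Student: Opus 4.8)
The plan is to treat the two assertions separately. Constructibility is formal and follows from Chevalley's theorem applied to a universal base-locus construction; the dimension bound is the real content, and I would obtain it by showing that the generic point of $X$ cannot be flex, exploiting the fact that a flex point is forced to be a constant cycle point.

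For constructibility, let $\mathbb{G}=\mathbb{G}(2,H^0(X,L))$ be the Grassmannian parametrizing pencils $V\subset|L|$, and form the universal base locus $\mathcal{B}=\{(V,y)\in\mathbb{G}\times X : s(y)=0 \text{ for all } s\in V\}$, which is closed, with projections $p\colon\mathcal{B}\to\mathbb{G}$ and $q\colon\mathcal{B}\to X$. Let $U\subset\mathbb{G}$ be the open locus of pencils with no base curve, so that $p$ is finite over $U$, with fibres of length $2d$. On $U$ the locus $\Sigma$ of pencils whose base scheme is supported at a single point is constructible: it is the complement in $U$ of the image under $p$ of the complement of the diagonal in $(\mathcal{B}\times_{\mathbb{G}}\mathcal{B})|_{U}$, which is constructible by Chevalley. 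The flex locus is then $R=q(p^{-1}(\Sigma))$, once more constructible by Chevalley. (In the embedded picture $X\hookrightarrow\mathbb{P}^{d+1}$ a pencil is a codimension-two linear subspace and $\Sigma$ records those meeting $X$ in a single point, but the intrinsic formulation also covers $d=1$, where $|L|$ is not an embedding.)

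Since $R$ is constructible and $X$ is an irreducible surface, proving $\dim R\le 1$ is equivalent to proving that $R$ is not dense, i.e. that the generic point is not flex. I would deduce this from the observation that every flex point $x$ is a constant cycle point: if $V=\langle D_0,D_1\rangle$ has base locus the single point $x$, then $D_0\cap D_1$ is a length-$2d$ subscheme supported at $x$, whence $D_0\cdot D_1=2d\,[x]$ in $\mathrm{CH}_0(X)$, while by Beauville--Voisin $D_0\cdot D_1=(L^2)\,o_X=2d\,o_X$. As $\mathrm{CH}_0(X)$ is torsion-free (Roitman, the Albanese of a K3 being trivial), this forces $[x]=o_X$. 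If $R$ were dense then $x\mapsto[x]$ would be constant on a dense open subset of $X$, contradicting Mumford's theorem, since $p_g(X)=1$ implies $\mathrm{CH}_0(X)$ is not representable and $x\mapsto[x]$ is non-constant on every open set. Hence $R$ is not dense and $\dim R\le 1$; for $X$ with ADE singularities one runs this argument on the smooth minimal resolution, the finitely many singular points contributing at most a zero-dimensional set.

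The main obstacle is precisely this last step. Constructibility is routine bookkeeping, but ruling out that every point is flex is not visibly geometric: a direct dimension count on contact conditions would have to be carried out uniformly across all degrees (and degenerates at small $d$, e.g. $d=1$ where there is no embedding). The argument above sidesteps this by reducing the whole dimension bound to the \emph{non-representability of $\mathrm{CH}_0$ of a K3 surface}, packaged through the Beauville--Voisin and Roitman identification of flex points as constant cycle points, which I expect to be the decisive ingredient.
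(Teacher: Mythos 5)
Your proposal is correct and follows essentially the same route as the paper: you identify flex points as constant cycle points via the Beauville--Voisin class (using $2d[x]=D_0\cdot D_1=2d\,o_X$ and Roitman's torsion-freeness of $\mathrm{CH}_0$), and then rule out a dense flex locus by Mumford's theorem on the non-representability of $\mathrm{CH}_0$ of a K3 surface. The only difference is that you spell out the Chevalley-theorem bookkeeping for constructibility, which the paper dismisses as elementary.
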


\begin{proof} Constructibility is elementary. To show the second statement,
it suffices to make the following observation: Any flex point $x\in X$ lies in the
Beauville-Voisin class $[x]=c_X\in {\rm CH}_0(X)$, defined in \cite{beauville2004chow}
as the class of any point on a rational curve in $X$. This follows because:
\begin{enumerate} 
\item $2d[x]=H_1\cdot H_2$
for hyperplane sections $H_1, H_2$ spanning the pencil $V$,
\item the intersection of two curves is some multiple of $c_X$ \cite[Thm.~1]{beauville2004chow}, and 
\item ${\rm CH}_0(X)$ is torsion-free \cite{rojtman1980torsion}. 
\end{enumerate}
If a Zariski-open subset of points of $X$ were flex, we would have that ${\rm CH}_0(X)=\Z$,
contradicting Mumford's theorem \cite{mumford1969rational}
on the uncountability of the Chow group. So
the constructible set of flex points has dimension at most $1$. \end{proof}

Given a smooth surface $X$, denote by $X^{[k]}$ the Hilbert
scheme of $k$ points on $X$. Let $F_{2d}^{\rm sing}$ denote the substack of $F_{2d}$
parameterizing singular ADE K3 surfaces, which is also a finite
union of Heegner divisors.

\begin{definition} Define the Zariski open subset 
$T:=F_{2d}\setminus (S\cup F_{2d}^{\rm sing})$.
 We assume for the remainder of the text
 that $(X,L)\in T$, unless otherwise stated.\end{definition}

Let $G:=\textrm{Gr}(g-1,g+1)$
be the Grassmannian of codimension $2$ linear spaces in
$H^0(X, L)^*$,
or equivalently pencils in $|L|$. Consider the map
\begin{displaymath}
  i\colon G\to X^{[2d]},\qquad V\mapsto \mathbb PV\cap X
\end{displaymath}
sending a codimension
$2$ linear space to its scheme-theoretic intersection with $X$, or equivalently sending
a pencil to its scheme-theoretic basic locus.

\begin{proposition}\label{closed-immersion} The mapping $i\colon G\to X^{[2d]}$ is a closed immersion. \end{proposition}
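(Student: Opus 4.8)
The plan is to verify the three standard conditions that force a morphism of smooth complex varieties to be a closed immersion: properness, injectivity on closed points, and injectivity of the differential everywhere. Properness is immediate, since $G$ is a projective Grassmannian and $X^{[2d]}$ is projective (indeed smooth of dimension $4d$, by Fogarty), so any morphism $G\to X^{[2d]}$ is proper. It therefore suffices to prove that $i$ is injective on closed points and that $di_V$ is injective for every $V$: a proper morphism of varieties over $\mathbb{C}$ that is injective on closed points and unramified is a closed immersion.

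The computational heart of both remaining steps is the single claim that for every $V\in G$, writing $Z=\mathbb{P}V\cap X$ and $U:=\operatorname{Ann}(V)\subset H^0(X,L)$ for the corresponding pencil, the restriction map $\rho_Z\colon H^0(X,L)\to H^0(Z,L|_Z)$ has kernel exactly $U$; equivalently $h^0(X,\mathcal{I}_Z\otimes L)=2$. I would prove this by adjunction. Because $(X,L)\in T$ the pencil has no base curve, so $Z$ is a length-$2d$ complete intersection of two distinct members $C,C'\in |U|$, and $Z\subset C$ is a Cartier divisor with $\mathcal{O}_C(Z)=L|_C=\omega_C$, the last identity being adjunction on the K3 surface ($\omega_X=\mathcal{O}_X$). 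Hence $Z\in|\omega_C|$, so any section of $L$ vanishing on $Z$ restricts to a section of $\omega_C(-Z)\cong\mathcal{O}_C$, and $h^0(\mathcal{O}_C)=1$ since $H^1(X,\mathcal{O}_X(-L))=0$. The kernel of this restriction is $H^0(\mathcal{O}_X(L-C))=H^0(\mathcal{O}_X)=\mathbb{C}$, giving $h^0(\mathcal{I}_Z\otimes L)\le 2$; the reverse inequality is clear as $U\subseteq H^0(\mathcal{I}_Z\otimes L)$. Thus $\ker\rho_Z=U$, and the linear span of $Z$ equals $\mathbb{P}V$.

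Injectivity on points is then immediate: if $\mathbb{P}V\cap X=\mathbb{P}V'\cap X=Z$ then $\mathbb{P}V=\langle Z\rangle=\mathbb{P}V'$, so $V=V'$. For the differential I would use that $Z$ is a complete intersection, so its conormal sheaf is free on the two defining sections and $T_Z X^{[2d]}=H^0(N_{Z/X})\cong H^0(L|_Z)^{\oplus 2}$, while $T_V G\cong\operatorname{Hom}(U,H^0(X,L)/U)$ via $U=\operatorname{Ann}(V)$. Under these identifications $di_V$ sends a first-order deformation $s_i\mapsto s_i+\epsilon t_i$ of the pencil (with $s_1,s_2$ a basis of $U$) to the pair $(\rho_Z(t_1),\rho_Z(t_2))$. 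Hence $di_V(\phi)=0$ forces $t_1,t_2\in\ker\rho_Z=U$, i.e. $\phi=0$, proving $di_V$ injective; here a dimension count ($\dim T_VG=2d$, $\dim T_ZX^{[2d]}=4d$) confirms there is room for this.

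The main obstacle is exactly the kernel computation $h^0(\mathcal{I}_Z\otimes L)=2$. A priori an extra hyperplane section could contain the base locus $Z$, and this single pathology would break both steps at once: the span $\langle Z\rangle$ would be strictly smaller than $\mathbb{P}V$ (destroying injectivity on points), and the same extra section would produce a nonzero element of $\ker di_V$ (destroying the immersion property). Everything hinges on the adjunction identity $\omega_C\cong L|_C$ together with $Z\in|\omega_C|$ and $h^0(\mathcal{O}_C)=1$, and this is precisely where the hypothesis $(X,L)\in T$ enters, guaranteeing that $Z$ is a genuine length-$2d$ complete intersection rather than carrying a base curve. Once $Z$ is known to be a complete intersection, the deformation-theoretic identification of $di_V$ is routine.
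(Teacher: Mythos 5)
Your proof is correct, and it takes a genuinely different route from the paper's. The paper disposes of both injectivity of $i$ on points and injectivity of $di$ with a single soft rigidity trick: if either failed, then the positive-dimensional family of codimension-$2$ planes containing a fixed codimension-$3$ (or $4$) plane would all cut out the same length-$2d$ subscheme $Z$, so $i$ would contract a curve; since $G$ has Picard rank one, any morphism from $G$ to a projective variety contracting a curve is constant, which is absurd. No sheaf cohomology is computed. You instead prove the sharper quantitative statement $h^0(X,\mathcal{I}_Z\otimes L)=2$, i.e. $\ker\rho_Z=U$, via adjunction ($\omega_C\cong L|_C$, so $Z\in|\omega_C|$) and the vanishing $H^1(X,-L)=0$ (giving $h^0(\mathcal{O}_C)=1$); this single lemma then kills both steps at once, showing $\langle Z\rangle=\mathbb{P}V$ for point-injectivity and forcing $t_i\in U$ whenever $t_i|_Z=0$ for injectivity of $di_V$. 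Your supporting details hold up: on $T$ the surface is smooth and every pencil has finite base locus, so $Z$ is a genuine zero-dimensional complete intersection; $Z\subset C$ is Cartier because $C$ is Cohen--Macaulay of pure dimension one, so the second section is a nonzerodivisor on $\mathcal{O}_C$; and the identification $T_{[Z]}X^{[2d]}\cong H^0(L|_Z)^{\oplus 2}$ via the free conormal sheaf of a complete intersection is standard (the paper asserts the same description of $di$ without proof). As for what each approach buys: the paper's argument is shorter, softer, and needs only the Picard rank of the Grassmannian, but it proves no more than what is needed; yours yields the stronger geometric fact that the base locus of a pencil spans exactly $\mathbb{P}V$, and your Koszul-type conormal computation anticipates the resolution the paper exploits later in Proposition \ref{koszul}. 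Both arguments ultimately invoke the same standard criterion (proper, injective on closed points, and unramified implies closed immersion), which you state explicitly and the paper leaves implicit.
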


\begin{proof} First, we show that $i$ is a set-theoretic embedding. Suppose,
for the sake of contradiction, that two pencils
$\mathbb PV_1\cap X = \mathbb PV_2\cap X$ intersect at the same length $2d$ subscheme $Z\subset X$.
Consider the set of codimension
$2$ linear spaces $$\mathbb{P}:=\{V\in G\,\big{|}\,V\supset V_1\cap V_2\}.$$ We necessarily have 
that $\bP V\cap X=Z$ for all $V\in \mathbb{P}$ because $Z\subset\bP V_1\cap\bP V_2\cap X$.
Hence $i(\mathbb{P})$ consists
of a single point. Since $\mathbb{P}$ contains a curve, we conclude that $i$ contracts a curve. But
any morphism from a Grassmannian to a projective variety contracting a curve must be constant.
So $i$ is constant, which is absurd.

Next, we show that the differential $di$ is injective. Recall that the tangent space
$T_VG={\rm Hom}(V,\,H^0(X,L)^*/V)$ whereas
$T_{[Z]}X^{[2d]}={\rm Hom}(I_Z /I_Z^2,\,\mathcal{O}_Z)$. We may write
$\mathbb PV=\{x\in \mathbb{P}^g\,\big{|}\, s_1(x)=s_2(x)=0\}$
for two sections $s_1,s_2\in H^0(X,L)$. 
A tangent vector $T_VG$ can be represented as the
vanishing locus of $(s_1+\epsilon t_1, s_2+\epsilon t_2)$,
where $t_1,t_2\in H^0(X,L)/\C s_1\oplus \C s_2$. 
Then $di$ maps it to $(s_1\mapsto t_1|_Z, s_2\mapsto t_2|_Z)$,
which uniquely determines an element of
${\rm Hom}(I_Z /I_Z^2,\,\mathcal{O}_Z)$ because
$I_Z=( s_1,s_2)$.
  
Supposing some nonzero $\phi\in T_VG$ satisfies $di(\phi)=0$,
at least one of $t_1, t_2\in H^0(X,L)/\C s_1\oplus \C s_2$ is
nonzero and satisfies $t_i \big{|}_Z=0$.
So $Z$ is contained in the codimension $3$
linear space $\{x\in \mathbb{P}^g\,\big{|}\,s_1(x)=s_2(x)=t_i(x)=0\}$.
But then the argument of the first paragraph applies to show
$i$ is constant. Contradiction.
 \end{proof}
 
Consider the Hilbert-Chow morphism $HC\colon X^{[2d]}\to X^{(2d)}$. Let 
$\Delta\subset X^{(2d)}$ be the small diagonal of effective zero cycles of the form $
2d[x]$ for some $x\in X$. Define a subscheme $P_{2d}\subset X^{[2d]}$
as the scheme-theoretic fiber $P_{2d}:=HC^{-1}(\Delta).$
Let $${\rm supp}\colon P_{2d}\to X$$ be the support morphism, sending
a scheme to the point at which it is supported. Finally,
let $i(G)\subset X^{[2d]}$ denote the image of the Grassannian
$G={\rm Gr}(g-1,g+1)$ under the morphism $i$, endowed with
its natural structure of a reduced, smooth subscheme. Finally,
we may now describe the flex divisor, at least generically.

\begin{definition} 
The {\it flex divisor} on a K3 surface $(X,L)\in T$
is the algebraic cycle
$$R_{\rm flex}:={\rm supp}_*[P_{2d}\cap i(G)].$$ Here
${\rm supp}_*$ denotes the proper pushforward of algebraic
cycles, and the brackets $[\cdot]$ denote the 
effective algebraic cycle underlying a subscheme.
 \end{definition}
 
 Note that the cycle class is being taken in $P_{2d}$ to make ${\rm supp}_*$
 sensical.
 
\begin{lemma}\label{equidim-flex} The subschemes $P_{2d}$ and $i(G)$ intersect
properly in $X^{[2d]}$, i.e. their intersection has pure dimension $1$. Furthermore,
$[P_{2d}]\cdot [i(G)]=[P_{2d}\cap i(G)]_{X^{[2d]}}.$ 
\end{lemma}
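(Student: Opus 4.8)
The plan is to establish the two assertions separately: first the dimension statement (proper intersection), and then the equality of the refined intersection product with the cycle of the scheme-theoretic intersection. Since $i$ is a closed immersion by Proposition \ref{closed-immersion}, I would identify $i(G)\cap P_{2d}$ with the locus $i^{-1}(P_{2d})=\{V\in G : \mathbb{P}V\cap X \text{ is supported at a single point}\}$. In the smooth variety $X^{[2d]}$ of dimension $4d$ the subvarieties $i(G)$ and $P_{2d}$ have codimensions $2d$ and $2d-1$, so every nonempty component of their intersection has dimension at least $4d-2d-(2d-1)=1$; it therefore remains only to bound the dimension from above by $1$. For this I would compose with the support morphism: the restriction of $\supp$ to $i^{-1}(P_{2d})$ lands in the flex locus, which has dimension at most $1$ by Lemma \ref{not-all-flex}. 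Hence it suffices to prove that the fibers of $\supp$ over points of $X$ are finite.

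To prove fiber-finiteness, fix $x\in X$. By the injectivity of $i$, a pencil $V$ with base locus at $x$ is determined by its base scheme $Z=\mathbb{P}V\cap X$, a length-$2d$ subscheme supported at $x$, i.e. a point of the punctual Hilbert scheme $H_x:=HC^{-1}(2d[x])$, which has dimension $2d-1$. The schemes arising this way are exactly those cut out on $X$ by two sections of $L$, equivalently those with $h^0(X,L\otimes\mathcal{I}_Z)\ge 2$. A general length-$2d$ scheme imposes $\min(2d,h^0(L))=h^0(L)=d+2$ independent conditions on $|L|$ (for $d\ge 2$), so that $h^0(L\otimes\mathcal{I}_Z)=0$; the plan is to show by a dimension count on $H_x$ that the closed locus where $h^0\ge 2$ is finite, which I would control using the higher osculating spaces of $X\subset\mathbb{P}^g$ at $x$, or equivalently a tangent-space computation on $H_x$. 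Making this count sharp is the first of the two main obstacles.

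For the cycle identity, observe that both $[P_{2d}]\cdot[i(G)]$ and $[P_{2d}\cap i(G)]$ are $1$-cycles supported on the pure one-dimensional set $i^{-1}(P_{2d})$, so it suffices to match their multiplicities at the generic point $\eta$ of each irreducible component. There the two subschemes localize to finite-length quotients of the regular local ring $\mathcal{O}_{X^{[2d]},\eta}$, and the Serre intersection multiplicity equals the scheme-theoretic length precisely when $\mathrm{Tor}^{\mathcal{O}}_{>0}(\mathcal{O}_{i(G)},\mathcal{O}_{P_{2d}})$ vanishes at $\eta$. Since $i(G)\cong G$ is smooth, $I_{i(G)}$ is locally generated by a regular sequence of length $2d$, and these Tor-modules compute the Koszul homology of that sequence on $\mathcal{O}_{P_{2d}}$; because the intersection is proper, the sequence is a system of parameters for $\mathcal{O}_{P_{2d},\eta}$, so the higher Tor vanish exactly when $P_{2d}$ is Cohen--Macaulay at $\eta$. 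I would deduce this by showing that at a generic point of each component the base scheme $Z$ is \emph{curvilinear} (contained in a smooth curve germ in $X$): the curvilinear locus of the punctual Hilbert scheme, fibered over the small diagonal $\Delta\cong X$, is smooth, whence $P_{2d}$ is smooth---hence Cohen--Macaulay---at $\eta$, and Serre's formula yields $[P_{2d}]\cdot[i(G)]=[P_{2d}\cap i(G)]$.

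The two technical hearts are thus the fiber-finiteness above and the curvilinearity at generic points of components; both reduce to understanding how special a punctual length-$2d$ scheme must be in order to be cut out by two members of $|L|$. I expect the curvilinearity---ruling out \emph{fat}, non-curvilinear generic flex schemes while working over the base-curve-free locus $T$---to be the more delicate point, since it is exactly the phenomenon responsible for the exotic components, such as the line on a quartic, that appear outside $T$.
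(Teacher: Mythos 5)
Your skeleton is the same as the paper's: lower-bound the dimension of each component by codimension counting, upper-bound it by composing with ${\rm supp}$, using Lemma \ref{not-all-flex} plus finiteness of the fibers of ${\rm supp}$, and then deduce the cycle identity from smoothness of $i(G)$ together with Cohen--Macaulayness of $P_{2d}$ (the paper invokes \cite[Prop.~7.1]{fulton2016intersection} at exactly this point, which is the content of your Tor/Koszul argument). But the two steps you flag as ``obstacles'' are precisely where your proposal stops being a proof, and in both cases the paper's resolution is different from, and much shorter than, what you plan. For fiber-finiteness the paper does no dimension count on the punctual Hilbert scheme: it observes that a positive-dimensional fiber of ${\rm supp}$ over a point $x$ would give a curve in $G$ contracted by $HC\circ i\colon G\to X^{(2d)}$, and any morphism from a Grassmannian to a projective variety that contracts a curve is constant --- the same rigidity argument already used in Proposition \ref{closed-immersion}, which you cite for injectivity of $i$ but do not reuse here. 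This is absurd, so \emph{all} fibers are finite, not just generic ones. Your alternative --- bounding the locus $\{Z\in H_x \,:\, h^0(L\otimes\mathcal{I}_Z)\geq 2\}$ via osculating spaces --- is left unexecuted, and it is not clear it can be completed without substantial new geometry.

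For the cycle identity, the paper needs no generic curvilinearity: it cites Haiman \cite[Prop.~2.10]{haiman1998t}, which gives that $P_{2d}$ is reduced, irreducible, and Cohen--Macaulay of dimension $2d+1$ \emph{everywhere}, so \cite[Prop.~7.1]{fulton2016intersection} applies at once. Your plan to establish Cohen--Macaulayness only at generic points of the intersection, by showing the generic flex scheme is curvilinear, is both unproven and genuinely risky: for $g>3$ a flex scheme is a punctual length-$2d$ complete intersection of two members of $|L|$, and nothing in the setup prevents both members from being singular at $x$, in which case $Z$ contains the full first infinitesimal neighborhood of $x$ and is not curvilinear (only for $g=3$, where $\mathbb{P}V$ is a line, is curvilinearity automatic). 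So this route would require proving a nontrivial genericity statement that the lemma itself does not need. In short: the architecture is right, but both technical hearts are missing, and the inputs that close them in the paper --- Grassmannian rigidity and Haiman's theorem --- are not ones your plan would likely recover.
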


\begin{proof} We have that $i(G)\subset X^{[2d]}$ is a smooth subscheme
of dimension $2d$. By a result of Haiman \cite[Prop.~2.10]{haiman1998t},
$P_{2d}\subset  X^{[2d]}$
is a reduced and irreducible 
Cohen-Macaulay scheme of dimension $2d+1$.
Hence, each component of their intersection has dimension
at least $1$. We claim additionally that each
component has dimension at most $1$.
Note ${\rm supp}(P_{2d}\cap i(G))\subsetneq X$ by Lemma \ref{not-all-flex}. 

The restriction of ${\rm supp}$ to $P_{2d}\cap i(G)$ contracts no curves
becuase no flex point $x\in X$ has a curve-worth of flex spaces:
If $x\in X$ supported a curve-worth of flex spaces, the morphism $HC\circ i :G\to X^{(2d)}$ would
contract a curve and hence, as before, $G$ would collapse
to a point in $X^{(2d)}$. This is absurd. So ${\rm supp}\big{|}_{P_{2d}\cap i(G)}$
is finite onto its image in $X$, which has dimension at most $1$.

Hence, each component of $P_{2d}\cap i(G)$
has dimension exactly $1$, that is,
$P_{2d}$ and $i(G)$ intersect properly.
Since $i(G)$ is smooth and $P_{2d}$ is Cohen-Macaulay,
\cite[Prop.~7.1]{fulton2016intersection}
gives the second statement.
\end{proof}

\begin{remark}\label{points-flex} The proof of Lemma \ref{equidim-flex} implies that every
component of the scheme $P_{2d}\cap i(G)$
contributes nontrivially to $R_{\rm flex}$. Hence, for
$(X,L)\in T$, $R_{\rm flex}$ is, {\it as a set}, exactly
the set of flex points. \end{remark}

\begin{proposition} Let
$u\colon \mathfrak{X}\to T$ be the restriction of the universal family of polarized K3 surfaces.
Then the flex divisors $\mathfrak{R}_{\rm flex}\subset \mathfrak{X}$ form a flat
subfamily of curves, specializing to $R_{\rm flex}$ on any fiber $X=\mathfrak{X}_t$.
 \end{proposition}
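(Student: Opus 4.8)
The plan is to construct $\mathfrak{R}_{\rm flex}$ as a relative version of the absolute flex divisor and then verify flatness by a dimension/equidimensionality argument combined with the constructions already available on each fiber. The key point is that every object entering the definition of $R_{\rm flex}$ on a single fiber has a natural relativization over $T$.

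\begin{proof}[Proof plan]
The strategy is to relativize the entire construction of $R_{\rm flex}$ over the base $T$ and deduce flatness from the fact that the relative intersection has pure fiber dimension $1$. First I would form the relative Hilbert scheme $\mathfrak{X}^{[2d]}\to T$, whose fiber over $t$ is $X_t^{[2d]}$; since $u\colon \mathfrak{X}\to T$ is a smooth proper family of surfaces, this relative Hilbert scheme is smooth and proper over $T$ of relative dimension $4d$. The line bundle $L$ spreads out to a relative polarization on $\mathfrak{X}$, so the Grassmannian bundle $\mathfrak{G}:={\rm Gr}(g-1,\, u_*L^*)\to T$ of relative pencils is well-defined and smooth over $T$ with fiber $G$. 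The relative intersection morphism $i\colon \mathfrak{G}\to \mathfrak{X}^{[2d]}$ is defined exactly as on each fiber, and by Proposition \ref{closed-immersion} applied fiberwise it is a closed immersion on each fiber, hence (being a morphism of smooth proper $T$-schemes that is a fiberwise closed immersion) a closed immersion $\mathfrak{i}(\mathfrak{G})\hookrightarrow \mathfrak{X}^{[2d]}$. Likewise the relative Hilbert--Chow morphism and the relative small diagonal $\Delta\subset \mathfrak{X}^{(2d)}$ yield the relative subscheme $\mathfrak{P}_{2d}:=HC^{-1}(\Delta)$, whose fibers are the $P_{2d}$ of each $X_t$; by Haiman's result these fibers are reduced, irreducible, Cohen--Macaulay of dimension $2d+1$, so $\mathfrak{P}_{2d}\to T$ is flat with Cohen--Macaulay fibers.

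Next I would set $\mathfrak{R}_{\rm flex}:={\rm supp}_*[\mathfrak{P}_{2d}\cap \mathfrak{i}(\mathfrak{G})]$, using the relative support morphism $\mathfrak{P}_{2d}\to \mathfrak{X}$, which specializes to $R_{\rm flex}$ on each fiber by construction. The heart of the argument is flatness over $T$. Here I would invoke the standard criterion that a finite-type morphism to a reduced (indeed smooth, irreducible) base whose fibers all have the same dimension, together with the source being Cohen--Macaulay and the base regular, is flat --- concretely, the miracle-flatness criterion \cite[Thm.~23.1]{matsumura}. By Lemma \ref{equidim-flex}, on each fiber $P_{2d}\cap i(G)$ has pure dimension $1$, so the relative intersection $\mathfrak{P}_{2d}\cap \mathfrak{i}(\mathfrak{G})$ has all fibers of the same dimension $1$ over the $19$-dimensional base $T$, hence is equidimensional of dimension $20$. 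Since $\mathfrak{i}(\mathfrak{G})$ is smooth over $T$ and $\mathfrak{P}_{2d}$ is relatively Cohen--Macaulay inside the smooth relative Hilbert scheme, the intersection $\mathfrak{P}_{2d}\cap \mathfrak{i}(\mathfrak{G})$ is Cohen--Macaulay, and miracle flatness then gives flatness of $\mathfrak{P}_{2d}\cap \mathfrak{i}(\mathfrak{G})\to T$. Flatness is preserved under the finite support morphism onto its image, yielding that $\mathfrak{R}_{\rm flex}\to T$ is a flat family of curves.

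The main obstacle I anticipate is controlling the behavior of the intersection in families so as to guarantee that the fiber of the relative intersection over $t$ coincides with the intersection $P_{2d}\cap i(G)$ computed inside the single surface $X_t^{[2d]}$, rather than being a proper subscheme or carrying extra embedded components. Because the formation of scheme-theoretic intersection need not commute with base change in general, I would need the relative intersection $\mathfrak{P}_{2d}\cap \mathfrak{i}(\mathfrak{G})$ to be $T$-flat with the expected fibers; this is exactly where the purity of dimension from Lemma \ref{equidim-flex} is essential, since equidimensionality of the fibers rules out both dimension jumps and embedded fiber components via the Cohen--Macaulay/miracle-flatness mechanism. Once fiberwise agreement is secured, the remaining verifications --- that ${\rm supp}$ restricted to the relative intersection stays finite over $T$ (which follows fiberwise from the no-contracted-curves argument in Lemma \ref{equidim-flex}) and that pushforward preserves flatness under this finite morphism --- are routine.
\end{proof}
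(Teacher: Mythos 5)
Your relativization of the construction---relative Grassmannian, relative Hilbert scheme, relative punctual locus $\mathfrak{P}_{2d}$, and the cycle $\mathfrak{supp}_*[\mathfrak{P}_{2d}\cap\mathfrak{i}(\mathfrak{G})]$---is exactly the paper's, as is the reliance on Lemma \ref{equidim-flex} for the fiberwise dimension count. Where you diverge is the flatness mechanism, and there are two genuine gaps there. First, your assertion that $\mathfrak{P}_{2d}\to T$ is flat because its fibers are reduced, irreducible and Cohen--Macaulay of constant dimension does not follow: equidimensional Cohen--Macaulay fibers give flatness via miracle flatness only once you know the \emph{total space} $\mathfrak{P}_{2d}$ is Cohen--Macaulay, while the standard way to get Cohen--Macaulayness of the total space is from flatness over the regular base with Cohen--Macaulay fibers---as written, the argument is circular. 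The circle can be broken (smoothness of $\mathfrak{X}\to T$ makes the relative punctual locus \'etale-locally over $\mathfrak{X}$ a product of the base with the punctual Hilbert scheme of $\mathbb{C}^2$, which is Haiman's Cohen--Macaulay scheme), but this needs to be said. Second, and more seriously, your closing step---``flatness is preserved under the finite support morphism onto its image''---is not a valid general principle: the image of a $T$-flat scheme under a finite $T$-morphism need not be $T$-flat, and in any case what must be shown is that the pushforward \emph{cycle} is a flat family of divisors, which is not the same statement as flatness of a scheme-theoretic image.

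The paper sidesteps both issues by arguing entirely downstairs in $\mathfrak{X}$: the cycle $\mathfrak{R}_{\rm flex}$ has codimension one in the smooth DM stack $\mathfrak{X}$, hence is Cartier, and by Lemma \ref{equidim-flex} applied to each fiber (together with finiteness of ${\rm supp}$ on fibers) it meets every fiber $X_t$ in dimension one, i.e.\ contains no fiber. An effective Cartier divisor in a flat family with integral fibers that contains no fiber is flat over the base; this single observation is the entire flatness proof, and it requires no flatness or Cohen--Macaulay statement upstairs in $\mathfrak{X}^{[2d]}$. It also repairs your last step: since your intersection (if shown flat) has all components dominating $T$, their images under the finite support map are divisors containing no fiber, and the same Cartier criterion applies. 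Finally, note that the identification $(\mathfrak{R}_{\rm flex})_t=R_{\rm flex}$ is not automatic ``by construction'': one must check that the cycle class $[\cdot]$ and the proper pushforward $\mathfrak{supp}_*$ commute with restriction to the fiber, which the paper does by observing that $X^{[2d]}\hookrightarrow\mathfrak{X}^{[2d]}$ is a smoothly embedded fiber meeting the cycles $\mathfrak{i}(\mathfrak{G})$ and $\mathfrak{P}_{2d}$ properly; your flatness of the intersection scheme, once correctly established, would also yield this via specialization of cycles in flat families.
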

 
 \begin{proof} It suffices to relativize the construction of $R_{\rm flex}$
 and check flatness of the resulting family of algebraic cycles.
 
  Let $\mathfrak{G}$ be the relative Grassmannian of codimension $2$
 linear subspaces of $\mathbb{P}(u_*\mathfrak{L})^*$ where $\mathfrak{L}\to \mathfrak{X}$
 is the universal polarization. Let $\mathfrak{X}^{[2d]}$ be the relative Hilbert scheme
 of $2d$ points, and let $\mathfrak{P}_{2d}$ be the subfamily of the relative Hilbert
 scheme consisting of schemes supported at a single point of the fiber and $\mathfrak{i}$
 the relative inclusion $\mathfrak{G}\hookrightarrow \mathfrak{X}^{[2d]}$. Let
 $\mathfrak{supp}:\mathfrak{P}_{2d}\to \mathfrak{X}$ be the relative support morphism.
 Consider the algebraic cycle
$$\mathfrak{R}_{\rm flex}:=
\mathfrak{supp}_*[\mathfrak{P}_{2d}\cap \mathfrak{i}(\mathfrak{G})]\subset \mathfrak{X}.$$
This cycle is a divisor in the smooth DM stack
$\mathfrak{X}$. Any fiber
 $X=\mathfrak{X}_t\hookrightarrow \mathfrak{X}$ intersects
 $\mathfrak{R}_{\rm flex}$ properly by Lemma \ref{equidim-flex}. Hence $\mathfrak{R}_{\rm flex}$
 forms a flat family of divisors in $\mathfrak{X}$.
 
 It remains to show that $\mathfrak{R}_{\rm flex}$
 specializes to $R_{\rm flex}$ as defined above on a fiber $X=\mathfrak{X}_t$. 
 The pushforward $\mathfrak{supp}_*$ of algebraic cycles and
 the cycle class map $[\cdot]$ commute
 with taking fibers over $t$ because the fibers $X^{[2d]}\hookrightarrow \mathfrak{X}^{[2d]}$
 are smoothly immersed and properly intersecting the cycles
 $\mathfrak{G}$ and $\mathfrak{P}_{2d}$.
 Hence,
 $(\mathfrak{R}_{\rm flex})_t=R_{\rm flex}$. \end{proof}
  
\begin{question} For a sufficiently generic $(X,L)\in F_{2d}$ is $R_{\rm flex}$
an irreducible divisor? What is its geometric genus, generically? \end{question}

\begin{remark} Based off \cite{welters1981abel},
Huybrechts \cite[Prop.~8.8]{huybrechts2013curves}
shows that when $L^2=4$, $R_{\rm flex}\in |20L|$ is generically
irreducible of geometric genus $201$. 
Strangely, this is the genus of a smooth
element of $|10L|$.
This is not an
error: $R_{\rm flex}$ is generically singular
for a quartic surface. \end{remark} 

We recall now the notion of a constant cycle curve:

\begin{definition} Let $X$ be a smooth K3 surface, and let $R\subset X$ be a curve.
 We say that $R$ is a {\it constant cycle curve} if every point $p\in R$ represents
 the same class in ${\rm CH}_0(X)$. This definition extends to curves $R\subset X$ in
 an ADE K3 surface by taking the inverse image of $R$
 in the minimal resolution of $X$. \end{definition}
 
 It is known that if $R$ is constant cycle, then $[p]=c_X\in {\rm
   CH}_0(X)$ for all $p\in R$.
 
 \begin{lemma} For $(X,L)\in T$,
 the divisor $R_{\rm flex}$ is a constant cycle curve. \end{lemma}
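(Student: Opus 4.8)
The plan is to combine two facts already established earlier in the excerpt. By Remark \ref{points-flex}, for $(X,L)\in T$ the divisor $R_{\rm flex}$ consists set-theoretically of exactly the flex points of $X$. It therefore suffices to show that every flex point represents one and the same class in ${\rm CH}_0(X)$, and that this class is the canonical class $c_X$.

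First I would recall the computation carried out in the proof of Lemma \ref{not-all-flex}. Given a flex point $x$ supported by a pencil $V\subset|L|$ spanned by hyperplane sections $H_1,H_2$, one has $2d[x]=H_1\cdot H_2$ in ${\rm CH}_0(X)$. By the Beauville--Voisin theorem \cite[Thm.~1]{beauville2004chow}, the intersection of any two curves on $X$ is an integer multiple of $c_X$, and comparing degrees gives $H_1\cdot H_2=2d\,c_X$. Since ${\rm CH}_0(X)$ is torsion-free by \cite{rojtman1980torsion}, the relation $2d[x]=2d\,c_X$ forces $[x]=c_X$, independently of which flex point $x$ was chosen.

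Second, because $(X,L)\in T=F_{2d}\setminus(S\cup F_{2d}^{\rm sing})$, the surface $X$ is smooth: the singular ADE K3 surfaces have been removed via $F_{2d}^{\rm sing}$. Thus no passage to a minimal resolution is required, and the definition of a constant cycle curve applies directly to $R_{\rm flex}\subset X$. Every point $p\in R_{\rm flex}$ is a flex point by Remark \ref{points-flex}, hence satisfies $[p]=c_X$, which is a fixed class; therefore $R_{\rm flex}$ is a constant cycle curve.

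There is no substantial obstacle here, since the essential work was already absorbed into the proof of Lemma \ref{not-all-flex}; the present statement merely records that the single class $c_X$ is attained at every point of $R_{\rm flex}$. The one point to keep straight is that \emph{constant cycle curve} is a condition on the reduced curve underlying the cycle $R_{\rm flex}$, so one invokes the set-theoretic description from Remark \ref{points-flex} rather than the scheme structure with its multiplicities.
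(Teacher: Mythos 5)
Your proof is correct and follows exactly the paper's route: the paper's own proof is the one-line observation that the lemma follows from Remark \ref{points-flex} together with items (1), (2), (3) in the proof of Lemma \ref{not-all-flex}, which is precisely the argument you spell out (including the degree comparison forcing $H_1\cdot H_2=2d\,c_X$ and torsion-freeness giving $[x]=c_X$). Your added remark that $X$ is smooth for $(X,L)\in T$, so no minimal resolution is needed, is a correct and harmless elaboration of the same approach.
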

 
 \begin{proof} This follows immediately from Remark \ref{points-flex} and items
 (1), (2), (3) in the proof of Lemma \ref{not-all-flex}.
 \end{proof}
 
\begin{lemma}\label{specialize} Let $\mathcal{X}\to (C,0)$ be a family of polarized K3 surfaces
and let $\mathcal{R}\subset \mathcal{X}$ be a flat family of curves over $C$. Suppose
that $\mathcal{R}_t$ is a constant cycle curve for all $t\neq 0$. Then
$\mathcal{R}_0\subset \mathcal{X}_0$ is also a constant cycle curve. \end{lemma}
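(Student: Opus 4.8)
The plan is to route the argument through the geometric generic fibre and the specialization homomorphism on zero-cycles. After a finite base change of $C$ the family of ADE K3 surfaces admits a simultaneous resolution restricting to the minimal resolution on each fibre; pulling $\mathcal{R}$ back to it, I may assume every fibre of $\mathcal{X}\to C$ is a smooth K3, since the constant cycle condition is by definition read on resolutions and is insensitive to finite base change. Fix a henselian valuation ring $V$ of $\bar K=\overline{\C(C)}$ dominating $\mathcal{O}_{C,0}$ and with residue field $\C$, and write $X_{\bar\eta},X_0$ for the geometric generic and special fibres. Two inputs drive the proof: (a) the specialization homomorphism ${\rm sp}\colon {\rm CH}_0(X_{\bar\eta})\to {\rm CH}_0(X_0)$ of \cite[Ch.~20]{fulton2016intersection}, under which the class of a $\bar K$-point specializes to the class of its (single) reduction point; and (b) the fact that for a smooth projective surface $Y/\C$ and a class $\alpha$, the orbit $\{y\in Y:[y]=\alpha\}$ is a countable union of closed subvarieties \cite{huybrechts2013curves}, so that an irreducible curve meeting it in uncountably many points lies in it.

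\emph{Step 1: the geometric generic fibre is a constant cycle curve.} If not, there are points $a,b\in\mathcal{R}_{\bar\eta}$ with $[a]\neq[b]$ in ${\rm CH}_0(X_{\bar\eta})$. Spreading $a,b$ out to sections over a finite cover $C'\to C$, I would apply the relative form of (b) to see that $\{s\in C':[a_s]=[b_s]\}$ is a countable union of closed subsets of $C'$. Since $a_s,b_s$ lie on the constant cycle curve $\mathcal{R}_s$ for every $s$ mapping to some $t\neq 0$, this locus contains the uncountable set lying over $C\setminus\{0\}$; hence it is all of $C'$ and $[a]=[b]$, a contradiction. Thus all points of $\mathcal{R}_{\bar\eta}$ carry one common class $c_{\bar\eta}$.

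\emph{Step 2: specialize.} Put $\alpha:={\rm sp}(c_{\bar\eta})\in {\rm CH}_0(X_0)$. Because $\mathcal{R}$ is proper and flat over $V$ with $\bar K$ and $\C$ algebraically closed, the reduction map $\mathcal{R}_{\bar\eta}(\bar K)\to \mathcal{R}_0(\C)$ is surjective: any $p\in\mathcal{R}_0$ lies on a horizontal curve $\Gamma\subset\mathcal{R}$ finite over $V$, and the henselian structure of $\Gamma$ provides a section through $p$. For such a lift $\tilde p$ one has $[\tilde p]=c_{\bar\eta}$, whence $[p]={\rm sp}[\tilde p]=\alpha$ by (a). As $p\in\mathcal{R}_0$ was arbitrary, every point of $\mathcal{R}_0$ has class $\alpha$, so $\mathcal{R}_0$ is a constant cycle curve. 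Note that one need not identify $\alpha$ at all; but in fact $\alpha=c_{X_0}$, since $24\,c_X=c_2(X)$ is compatible with specialization and ${\rm CH}_0$ is torsion-free \cite{beauville2004chow, rojtman1980torsion}.

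\emph{Main obstacle.} The formal parts, namely the simultaneous resolution, the specialization formalism, and the surjectivity of reduction, are routine. The delicate point is Step 1, i.e. upgrading ``constant cycle for every $t\neq0$'' to ``constant cycle at the geometric generic point.'' This is exactly where the countable-union structure of rational equivalence (b), in its relative form over $C'$, is indispensable, and I would take particular care to justify that the locus $\{s:[a_s]=[b_s]\}$ really is a countable union of \emph{closed} subsets, rather than merely a countable union of constructible ones, so that the uncountability of $C\setminus\{0\}$ forces it to exhaust $C'$.
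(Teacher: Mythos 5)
Your proof is correct and follows essentially the same route as the paper's: reduce to a smooth family via simultaneous resolution after a finite base change, lift points of $\mathcal{R}_0$ to points of the generic fiber over a finite extension of $\C(C)$, and conclude by specialization of rational equivalence \cite[Cor.~20.3]{fulton2016intersection}. The only difference is that the paper leaves your Step 1 implicit---it tacitly uses that the lifted points are rationally equivalent on the geometric generic fiber---so your countable-union-of-closed-subsets argument is a welcome filling-in of that step rather than a departure.
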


\begin{proof} Replacing $\mathcal{X}$ with a finite base change,
there is a simultaneous resolution of singularities which is the minimal
resolution on any fiber. So we may assume $\mathcal{X}\to (C,0)$
is smooth. Any two points $p,q\in \mathcal{R}_0$ can
be realized as specializations of points over a finite extension of $\C(C)$.
The lemma follows because the specializations of 
rationally equivalent cycles are rationally equivalent
\cite[Cor.~20.3]{fulton2016intersection}. \end{proof}

\begin{theorem} Let $u\colon \mathfrak{X}\to F_{2d}$ be the universal K3 surface,
$T\subset F_{2d}$ a Zariski open subset,
and let $\mathfrak{R}^*\subset \mathfrak{X}^*:=\mathfrak{X}\big{|}_T$
be a flat family of divisors, which is a constant cycle curve $R=\mathfrak{R}_t$
on every fiber $X=\mathfrak{X}_t$. Then $\mathfrak{R}^*$ extends to a flat family of divisors
$\mathfrak{R}$ over the universal K3 surface $\mathfrak{X}\to F_{2d}$. \end{theorem}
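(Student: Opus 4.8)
The plan is to extend the flat family of divisors $\mathfrak{R}^*$ over $T$ to a flat family over all of $F_{2d}$ by taking a closure and arguing that the resulting limit divisors remain flat---i.e., that no component of the closure collapses onto a fiber. First I would note that $\mathfrak{R}^*\subset \mathfrak{X}^*$ is a divisor in the smooth DM stack $\mathfrak{X}^*$, and I would take the scheme-theoretic (or cycle-theoretic) closure $\overline{\mathfrak{R}^*}$ inside $\mathfrak{X}$. Since $T$ is the complement of a finite union of Heegner divisors and $F_{2d}$ is smooth and irreducible of dimension $19$, the boundary $F_{2d}\setminus T$ has codimension $1$, so I cannot simply extend by a codimension-$\geq 2$ argument. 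Instead, the essential point is to check flatness of $\overline{\mathfrak{R}^*}\to F_{2d}$, which by the smoothness of $\mathfrak{X}$ is equivalent to showing that the closure contains no fiber $X=\mathfrak{X}_t$ as a component for $t$ in the boundary.

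The key reduction is to a one-parameter situation: flatness over a smooth base can be tested along curves, so I would let $(C,0)\to F_{2d}$ be a map from a smooth curve hitting the boundary at $0$ and meeting $T$ on $C\setminus 0$, pull back to get $\mathcal{X}\to (C,0)$ and the flat family $\mathcal{R}^*$ over $C\setminus 0$, and take the closure $\mathcal{R}=\overline{\mathcal{R}^*}$. By construction $\mathcal{R}$ is flat over $C\setminus 0$, and after possibly removing embedded or fiber components the central fiber $\mathcal{R}_0$ decomposes as $\mathcal{R}_0 = D + m X_0$, where $D$ is the ``honest'' limit curve (the flat limit) and $mX_0$ is the possible collapsed part supported on the whole central fiber. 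The goal is to show $m=0$. Here is where the constant cycle hypothesis enters decisively: by Lemma \ref{specialize}, the flat limit $D$ is a constant cycle curve, so every point of $D$ lies in the Beauville--Voisin class $c_{X_0}\in {\rm CH}_0(X_0)$. If $m>0$, then the entire surface $X_0$ would appear as a component of the limit divisor, which is incompatible with the class $[\mathcal{R}_t]=[nL]$ being a fixed multiple of the polarization and restricting compatibly across fibers---a whole-fiber component would force the restriction $[\mathcal{R}]\cdot X_0$ to differ from the divisor class $nL|_{X_0}$ dictated by the flat family over $C\setminus 0$.

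I would make this precise by comparing divisor classes. Since $\mathcal{R}^*\subset \mathcal{X}^*$ has class $nL$ on each fiber $X_t$ (by flatness and constancy of the polarization multiple), and since $\mathcal{X}\to C$ is smooth after base change, the line bundle $\mathcal{O}_{\mathcal{X}}(\mathcal{R})$ restricts on the central fiber to a bundle whose class is $nL$ modulo the principal divisor $X_0$ itself; but $X_0$ is linearly trivial on $\mathcal{X}$ up to a multiple, so $\mathcal{O}_{\mathcal{X}}(\mathcal{R})|_{X_0} \cong \mathcal{O}_{X_0}(nL - mX_0|_{X_0})$. The restriction $X_0|_{X_0}$ is the normal bundle, which for a K3 fibration over a curve is $\mathcal{O}_{X_0}$ (trivial canonical-type constraint), so the only way for the flat limit $D\in |nL|$ to have the correct class $nL$ is $m=0$. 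Thus $\mathcal{R}_0 = D$ has no fiber component, flatness holds across $0$, and since the one-parameter test was arbitrary, $\overline{\mathfrak{R}^*}\to F_{2d}$ is flat. The main obstacle I anticipate is precisely this normal-bundle/class bookkeeping: one must argue cleanly that a constant cycle curve cannot degenerate by absorbing a whole K3 fiber, and that the intersection-theoretic constraint forcing $m=0$ holds over the stack $F_{2d}$ (as opposed to a smooth scheme), which may require working étale-locally or on a suitable cover of the moduli stack where $\mathfrak{X}$ is an honest smooth family.
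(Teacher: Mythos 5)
Your proposal contains a genuine gap, and in fact the step where you believe the constant cycle hypothesis ``enters decisively'' is where the logic breaks down. First, the non-issue: along a single arc $(C,0)$, the flat limit of $\mathcal{R}^*$ is the closure of $\mathcal{R}^*$ in $\mathcal{X}$, and every component of that closure dominates $C$ by construction, so no multiple $mX_0$ of the central fiber can appear; your elaborate bookkeeping to force $m=0$ addresses a problem that does not exist. Worse, the bookkeeping itself is a non sequitur: as you yourself observe, the normal bundle $\mathcal{O}_{X_0}(X_0)$ is trivial, so adding $mX_0$ to the limit divisor does not change the restricted class at all --- the class constraint is therefore incapable of forcing $m=0$. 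The actual difficulty is different: the complement of $T$ has components of codimension $1$ in $F_{2d}$, and at points of codimension $\geq 2$ in $F_{2d}$ the flat limits along \emph{different} arcs through the same boundary point $0$ could a priori be \emph{different} curves in $X_0$ (each of class $nL$, so no intersection-theoretic argument can distinguish them). In that case the fiber of your global closure $\overline{\mathfrak{R}^*}$ over $0$ is the union of all these arc-limits, hence $2$-dimensional, and flatness fails. Your claim that ``flatness over a smooth base can be tested along curves'' is false in exactly the needed sense: the pullback of the global closure to an arc can be strictly larger than the arc-closure, and this is precisely the failure mode one must rule out.

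The paper's proof supplies exactly the missing ingredient, and it is where the constant cycle hypothesis genuinely works. By Lemma \ref{specialize} each arc-limit $\mathcal{R}_0$ is a constant cycle curve; by Mumford's theorem (as noted in \cite[Sec.~2.3]{huybrechts2013curves}) constant cycle curves are \emph{rigid}, i.e., they cannot move in a family inside $X_0$. Hence as the arc through $0$ deforms, the limit cannot deform, so all arcs give the \emph{same} limit. One then packages the family as a section of the projective bundle $\mathbb{P}(u_*\mathfrak{L})$ over $T$ and invokes the well-known argument \cite[Lem.~3.16]{alexeev2019stable}: a rational section to a projective bundle over a normal base whose one-parameter limits are independent of the arc extends as a morphism. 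Your proposal invokes Lemma \ref{specialize} but then uses only the cycle class of the limit, never its rigidity; without rigidity (or some substitute argument for independence of the arc), the extension claim is unproven.
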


\begin{proof} Let $\mathfrak{L}$ be an extension of
$\mathcal{O}_{\mathfrak{X}^*}(\mathfrak{R}^*)$ to $\mathfrak{X}$ and define
the projective bundle $\mathbb{P}(u_*\mathfrak{L})\to F_{2d}$. By assumption,
we have a section of $\mathbb{P}(u_*\mathfrak{L})$ over the open subset $T$
defined by $\mathfrak{R}^*$.
Let $0\in F_{2d}\setminus T$. Given any arc $(C,0)\subset F_{2d}$ with
$C\setminus \{0\}\subset T$, there is a unique flat family of curves
$\mathcal{R}\subset \mathfrak{X}\big{|}_C$ extending
$\mathfrak{R}^*\big{|}_{C\setminus \{0\}}$.

By Lemma \ref{specialize},
the central fiber $\mathcal{R}_0$ is constant cycle. 
As noted in \cite[Sec.~2.3]{huybrechts2013curves},
Mumford's theorem \cite{mumford1969rational}
implies constant cycle curves are rigid.
So the flat limit $\mathcal{R}_0$ doesn't deform as the arc
$(C,0)$ deforms. Since $F_{2d}$ is smooth, in particular normal,
we conclude by a well-known argument
\cite[Lem.~3.16]{alexeev2019stable}
that the section of $\mathbb{P}(u_*\mathfrak{L})$ over $T$
extends, as a morphism, over $0$.
 The result follows.\end{proof}

\begin{corollary} $\mathfrak{R}_{\rm flex}$ extends to a flat family of divisors in
the universal K3 surface over $F_{2d}$. \end{corollary}

\section{Degree of the Flex Divisor}
\label{sec:flex-degree}

In this section, we compute the degree of the flex divisor. We follow
\cite{ellingsrud2000hilbert}
as a general reference on the cohomology of Hilbert schemes.
%
%


\begin{definition}\label{nakajima} Let $n>0$ be a positive integer and let $\alpha\in H^*(X)$ be a
cohomology class of pure degree. Define $$\mathbb{L}:=\bigoplus_{m,\,k\geq 0} H^m(X^{[k]}).$$
The {\it Nakajima (raising) operator} $q_{-n}(\alpha):\mathbb{L}\to \mathbb{L}$ is defined by
the following correspondence: Let $a\geq 0$ and define $b:=a+n$. Let $X^{[a,b]}$ 
be the incidence correspondence of nested pairs of zero-dimensional 
subschemes $Z_1\subset Z_2\subset X$ for which ${\rm len}\,Z_1=a$ and ${\rm len}\,Z_2=b$,
and let $\pi_a$ and $\pi_b$ be the projections to $X^{[a]}$ and $X^{[b]}$. Let $S$ be the 
residual support morphism $X^{[a,b]}\to X^{(n)}$ sending
$$S\colon (Z_1, Z_2)\mapsto {\rm supp}(Z_2)-{\rm supp}(Z_1)$$ and let $W_{a,b}\subset S^{-1}(\Delta)$
be the irreducible component of $S^{-1}(\Delta)$ which is the Zariski closure of the $Z_1\subset Z_2$
for which ${\rm supp}(Z_1)$
and ${\rm supp}(Z_2)-{\rm supp}(Z_1)$ are disjoint. Let
$s:W_{a,b}\to \Delta\cong X$ denote the restriction of $S$
and let $t:W_{a,b}\to X^{[a,b]}$ be the inclusion.
Then for any $c\in H^r(X^{[a]})$ we define
$$q_{-n}(\alpha) (c):=(\pi_b)_*(\pi_a^*c \cdot t_*s^*\alpha)\in H^{r+2n-2+\deg \alpha}(X^{[b]}).$$
By definition, we declare $H^*(X^{[0]})=\C{\bf 1}$ where ${\bf 1}$ is called
the {\it vacuum element}.
\end{definition}

The bidegree of the operator $q_{-n}(\alpha)$ is $(2n-2+\deg\alpha,n)$, where
the first degree is cohomological degree, and the second is number of points.

\begin{remark} Definition \ref{nakajima} can be
intuitively rephrased as follows: The operator $q_{-n}(\alpha)$ takes a family of
subschemes of length $a$ (i.e. a cycle in $X^{[a]}$)
and tacks on a subscheme of length $n$ supported at a single point lying
on the cycle $\alpha$. \end{remark}

\begin{theorem}[Nakajima \cite{nakajima1997heisenberg},
Grojnowski \cite{grojnowski1996instantons}] Let $\{e_i\}_{i=1}^{24}$ be a basis of $H^*(X)$.
Then $q_{-n_1}(e_{i_1})\cdots q_{-n_k}(e_{i_k}){\bf 1}$
(up to reordering operators) are a basis of~$\,\mathbb{L}$.
 \end{theorem}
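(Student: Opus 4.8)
The plan is to realize $\mathbb{L}$ as an irreducible Fock-space representation of an infinite-dimensional Heisenberg algebra built from the Nakajima operators, and then to pin down that it is exactly the Fock space by a dimension count against G\"ottsche's formula. First I would introduce the adjoint \emph{annihilation} operators $q_n(\alpha)$ for $n>0$, defined by the transpose of the correspondence in Definition \ref{nakajima}: geometrically $q_n(\alpha)$ strips off a length-$n$ cluster supported at a point of the cycle $\alpha$, lowering the number of points by $n$. The vacuum ${\bf 1}\in H^*(X^{[0]})$ is annihilated by every $q_n(\alpha)$ with $n>0$, since the empty scheme has no points to remove.

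The crux of the argument is to establish the Heisenberg commutation relations
$$[q_m(\alpha),\,q_n(\beta)] = (-1)^{m-1}\,m\,\delta_{m+n,0}\,(\alpha\cdot\beta)\,\mathrm{id},$$
where $\alpha\cdot\beta$ denotes the intersection pairing on $H^*(X)$. These are proved by composing the two defining correspondences on $X^{[a,b]}$, forming the relevant fiber product, and analyzing it component by component. When the supports of the added and removed clusters are generically disjoint the two correspondences commute, so all such contributions cancel in the commutator; the only surviving term comes from the locus where the clusters collide, i.e.\ the excess along the boundary components $W_{a,b}$. Identifying this excess contribution, together with its sign $(-1)^{m-1}m$ and its proportionality to $(\alpha\cdot\beta)$, is the main obstacle: it requires a careful transversality analysis of the nested Hilbert schemes and an excess-intersection (normal-bundle) computation, which is precisely the technical heart of Nakajima \cite{nakajima1997heisenberg} and Grojnowski \cite{grojnowski1996instantons}.

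Granting the commutation relations, $\mathbb{L}$ becomes a module over the Heisenberg algebra generated by the $q_n(\alpha)$, and the vacuum ${\bf 1}$ is a highest-weight (lowest-energy) vector killed by all lowering operators. The standard representation theory of the Heisenberg algebra with nonzero central charge then shows that the submodule generated from ${\bf 1}$ by the raising operators $q_{-n}(\alpha)$ is irreducible and \emph{free}: the monomials $q_{-n_1}(e_{i_1})\cdots q_{-n_k}(e_{i_k}){\bf 1}$, taken up to the prescribed reordering dictated by the commutation signs, are linearly independent. This yields the linear-independence half of the statement and identifies the span as an abstract Fock space.

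It remains to check that these monomials span all of $\mathbb{L}$, i.e.\ that the Fock submodule exhausts the cohomology. For this I would compare graded dimensions in each bidegree $(m,k)$. The number of creation monomials landing in $H^*(X^{[k]})$, weighted according to $\deg e_i$, has generating function $\prod_{n\geq 1}\prod_i (1-t^{\deg e_i + 2n-2}q^n)^{\mp 1}$ (signs according to parity of $\deg e_i$), which is exactly G\"ottsche's formula for $\sum_k \big(\sum_m \dim H^m(X^{[k]})\,t^m\big)q^k$ (see \cite{ellingsrud2000hilbert}). Since the creation operators are injective on the Fock space by the previous paragraph and the total dimensions agree in every bidegree, the span must equal $\mathbb{L}$. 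Combining the two halves gives the claimed basis.
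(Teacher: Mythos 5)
This theorem is stated in the paper as a quoted result, attributed to Nakajima and Grojnowski, and the paper offers no proof at all --- only the remark that the operators $q_{-n}(\alpha)$ extend to a Heisenberg algebra action identifying $\mathbb{L}$ with the bosonic Fock space. Your proposal is a correct reconstruction of the standard argument from the cited literature, and it matches exactly the content of that remark: one introduces the annihilation operators $q_n(\alpha)$ as transposed correspondences, proves the Heisenberg commutation relations $[q_m(\alpha),q_n(\beta)]=(-1)^{m-1}m\,\delta_{m+n,0}(\alpha\cdot\beta)\,\mathrm{id}$, invokes irreducibility of the Fock representation (which needs nondegeneracy of the intersection pairing, i.e.\ Poincar\'e duality, and gives linear independence of the creation monomials), and then closes the gap between the Fock submodule and all of $\mathbb{L}$ by comparing graded dimensions with G\"ottsche's formula --- for a K3 surface all cohomology is even, so your $\mp$ signs collapse to the purely bosonic case and the character is $\prod_{n\geq 1}(1-q^n)^{-24}$ after setting $t=1$. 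Two small points of accuracy worth noting: (i) the technical heart you flag --- the excess-intersection computation of the commutator --- is indeed the entire content of Nakajima's paper, and deferring it there is legitimate here since the theorem is attributed to precisely those works; (ii) historically Nakajima's original argument pinned the commutator constant down only up to a nonzero universal scalar (the exact value $(-1)^{m-1}m$ was determined later by Ellingsrud--Str\o mme), but for the basis statement any nonzero constant suffices, so your argument is not sensitive to this. In short: the proposal is sound, and it is the standard proof that the paper implicitly points to rather than reproduces.
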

 
 More precisely, these Nakajima operators extend to 
 an action of the Heisenberg algebra of
 $H^*(X)$ on $\mathbb{L}$, which becomes identified
with the bosonic Fock space.

\begin{remark} 
It follows directly from the definition of the Nakajima operators
that $[P_{2d}]=q_{-2d}(1){\bf 1}$. Similarly, the schemes supported
on a single point of a hyperplane section $H\subset X$ 
have class $q_{-2d}(h){\bf 1}$, with $[H]=h\in H^2(X)$. \end{remark}

%
%
%

\begin{lemma}\label{first-formula} The degree of the flex divisor is
${\rm deg}(i^*q_{-2d}(h){\bf 1}).$ \end{lemma}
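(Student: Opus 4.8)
The plan is to unwind the definition $R_{\rm flex}={\rm supp}_*[P_{2d}\cap i(G)]$ and transport the degree computation, which a priori lives on $X$, onto the Grassmannian $G$ by a sequence of projection formulas. Here ``degree'' means the intersection number $h\cdot R_{\rm flex}$ of the flex curve with the polarization class $h=[H]\in H^2(X)$. First I would apply the projection formula to the proper morphism ${\rm supp}\colon P_{2d}\to X$ to obtain
$$h\cdot R_{\rm flex}=\int_X h\cap {\rm supp}_*[P_{2d}\cap i(G)]=\int_{P_{2d}} {\rm supp}^*h\cap [P_{2d}\cap i(G)],$$
where $[P_{2d}\cap i(G)]$ is the $1$-cycle class in $P_{2d}$ used to define $R_{\rm flex}$.

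The second step is to recognize $q_{-2d}(h){\bf 1}$ geometrically. By the Remark preceding the statement, $q_{-2d}(h){\bf 1}$ is the class in $X^{[2d]}$ of the locus ${\rm supp}^{-1}(H)\subset P_{2d}$ of length-$2d$ schemes supported at a single point of a general hyperplane section $H$. Since ${\rm supp}^{-1}(H)$ is the Cartier divisor ${\rm supp}^*H$ on $P_{2d}$, writing $j\colon P_{2d}\hookrightarrow X^{[2d]}$ for the inclusion I would record the identity $q_{-2d}(h){\bf 1}=j_*({\rm supp}^*h\cap [P_{2d}])$. Then the projection formula for the closed immersion $i\colon G\hookrightarrow X^{[2d]}$ of Proposition \ref{closed-immersion} (with $i(G)$ smooth of the complementary dimension $2d$) gives
$$\deg\big(i^*q_{-2d}(h){\bf 1}\big)=\int_{X^{[2d]}} q_{-2d}(h){\bf 1}\cup [i(G)]=\int_{X^{[2d]}} j_*\big({\rm supp}^*h\cap [P_{2d}]\big)\cup [i(G)].$$

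Applying the projection formula once more, now to $j$, this becomes $\int_{P_{2d}} {\rm supp}^*h\cap\big(j^*[i(G)]\cap [P_{2d}]\big)$. The crucial input is that by Lemma \ref{equidim-flex} the subschemes $P_{2d}$ and $i(G)$ meet properly, and $i(G)$ is smooth while $P_{2d}$ is Cohen-Macaulay, so by \cite[Prop.~7.1]{fulton2016intersection} the refined Gysin class $j^*[i(G)]\cap [P_{2d}]$ coincides with the honest cycle $[P_{2d}\cap i(G)]$ in $P_{2d}$. Substituting, $\deg(i^*q_{-2d}(h){\bf 1})=\int_{P_{2d}}{\rm supp}^*h\cap [P_{2d}\cap i(G)]$, which matches the expression for $h\cdot R_{\rm flex}$ from the first step.

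I expect the main obstacle to be the bookkeeping in the middle step: pinning down the class $q_{-2d}(h){\bf 1}$ as the pushforward $j_*({\rm supp}^*h\cap [P_{2d}])$ directly from the Nakajima correspondence of Definition \ref{nakajima}, and, more delicately, ensuring that the successive projection formulas and the refined Gysin pullback are mutually compatible, so that the single $1$-cycle $[P_{2d}\cap i(G)]$ is preserved as one passes between $G$, $X^{[2d]}$, and $P_{2d}$. Everything else is formal once the proper-intersection statement of Lemma \ref{equidim-flex} is in hand; the smoothness of $i(G)$ and the Cohen-Macaulayness of $P_{2d}$ are precisely what guarantees that the refined intersection equals the naive cycle, licensing the final substitution.
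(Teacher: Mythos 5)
Your proposal is correct and follows essentially the same route as the paper: the paper's proof is a single chain of equalities "by push-pull formula," passing from $R_{\rm flex}\cdot_X H$ to $[P_{2d}\cap i(G)]\cdot_{P_{2d}}{\rm supp}^*H$ to $i(G)\cdot_{X^{[2d]}}q_{-2d}(h){\bf 1}$, which is exactly your sequence of projection formulas combined with the identification of $q_{-2d}(h){\bf 1}$ as the class of ${\rm supp}^{-1}(H)$ and the appeal to Lemma \ref{equidim-flex}. Your write-up merely makes explicit the bookkeeping (the pushforward along $j\colon P_{2d}\hookrightarrow X^{[2d]}$ and the use of \cite[Prop.~7.1]{fulton2016intersection}) that the paper leaves implicit.
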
 

\begin{proof} By push-pull formula,
\begin{align*} {\rm deg}(R_{\rm flex})=R_{\rm flex}\cdot_X H :&= {\rm supp}_*[P_{2d}\cap i(G)]\cdot_X H
= [P_{2d}\cap i(G)]\cdot_{P_{2d}} {\rm supp}^*H \\ &=i(G)\cdot_{X^{[2d]}}  q_{-2d}(h){\bf 1} = {\rm deg}(i^*q_{-2d}(h){\bf 1}).\end{align*} \end{proof}

Let $\sigma_i\textrm{ for }i=1,2$ denote the
 Schubert classes in $H^{2i}(G)$ consisting of linear spaces
 meeting a line and a point in $\mathbb{P}^g$, respectively.

\begin{proposition}\label{int-lemma} The degree of the flex divisor is
  $\sigma_1\cdot i^*q_{-2d}(1){\bf 1}.$ \end{proposition}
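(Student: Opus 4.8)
The plan is to compare the two classes $i^*q_{-2d}(h)\mathbf 1$ and $\sigma_1\cup i^*q_{-2d}(1)\mathbf 1$ directly in the top cohomology $H^{4d}(G)\cong\Q$, since by Lemma~\ref{first-formula} the degree of the left-hand side is exactly $\deg R_{\rm flex}$. Writing $\iota\colon P_{2d}\hookrightarrow X^{[2d]}$ for the inclusion and $\rho:=\supp\colon P_{2d}\to X$, the preceding remark gives $q_{-2d}(h)\mathbf 1=\iota_*\rho^*h$ and $q_{-2d}(1)\mathbf 1=\iota_*1=[P_{2d}]$, where $h=c_1(L)$ satisfies $h^2=2d$. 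So the whole point is to account for the single factor $h$ sitting on $P_{2d}$ by a class pulled back from $X^{[2d]}$, and then to determine what that class restricts to on the Grassmannian.

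First I would introduce the natural class $h_{[2d]}\in H^2(X^{[2d]})$, the image of $h$ under $H^2(X)\cong H^2(X^{(2d)})\xrightarrow{HC^*}H^2(X^{[2d]})$. The key local computation is that restricting $h_{[2d]}$ to the punctual locus $P_{2d}=HC^{-1}(\Delta)$ picks up a factor equal to the number of points: since the small-diagonal embedding $X\cong\Delta\hookrightarrow X^{(2d)}$ pulls the symmetric divisor class back to $2d\cdot h$, one gets $\iota^*h_{[2d]}=2d\,\rho^*h$. The projection formula for the closed immersion $\iota$ then yields the clean identity $2d\cdot q_{-2d}(h)\mathbf 1=h_{[2d]}\cup q_{-2d}(1)\mathbf 1$ in $H^*(X^{[2d]})$, which I would then pull back by $i$.

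It remains to evaluate $i^*h_{[2d]}\in H^2(G)\cong\Z\sigma_1$, which I would do with a Schubert line test. Let $\ell\subset G$ be a line, i.e. the pencil of codimension-$2$ spaces $\bP(U)$ with $\bP(B)\subset\bP(U)\subset\bP(C)$ for fixed subspaces $\dim B=g-2$, $\dim C=g$, so that $\sigma_1\cdot\ell=1$. As $U$ ranges over $\ell$ the spaces $\bP(U)$ sweep out the hyperplane $\bP(C)$, and the universal base locus is the incidence curve $\mathcal C=\{(y,U):y\in\bP(U)\cap X\}$, which maps isomorphically onto the hyperplane section $Y=X\cap\bP(C)$ of class $h$. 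Computing $\int_\ell i^*h_{[2d]}$ as $\int_{\mathcal C}\mathrm{ev}^*h=h\cdot[Y]=h^2=2d$ gives $i^*h_{[2d]}=2d\,\sigma_1$. Substituting into the pulled-back identity yields $2d\cdot i^*q_{-2d}(h)\mathbf 1=i^*h_{[2d]}\cup i^*q_{-2d}(1)\mathbf 1=2d\,\sigma_1\cup i^*q_{-2d}(1)\mathbf 1$; cancelling $2d$ and taking degrees proves the proposition.

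The main obstacle is precisely the bookkeeping of these two factors of $2d$ and the verification that they cancel: one factor arises from restricting the symmetric class to the deeply non-reduced small diagonal (the computation of $\iota^*h_{[2d]}$), the other from the degree of $X$ in $\bP^g$ entering the line test (the computation of $i^*h_{[2d]}$). I would take care to define $h_{[2d]}$ via $HC^*$ rather than as a naive "meets $H$" divisor, so that both pullbacks are computed against the same integral class, and to choose $\ell$ and $\bP(C)$ generically enough that $X\cap\bP(B)=\emptyset$ and the incidence $\mathcal C\to Y$ is an isomorphism, making the intersection number literally $h^2$.
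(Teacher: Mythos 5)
Your proof is correct, and it reaches the paper's two key identities by a genuinely different route. The paper's proof has the same skeleton as yours: it produces a divisor class on $X^{[2d]}$ (there realized as $q_{-1}(h)q_{-1}(1)^{2d-1}{\bf 1}$, represented by the locus $D_H$ of schemes meeting $H$) whose product with $q_{-2d}(1){\bf 1}$ is $2d\,q_{-2d}(h){\bf 1}$ and whose restriction to $G$ is $2d\sigma_1$, and then cancels the two factors of $2d$. But for the first identity the paper declares the intersection ``set-theoretically clear'' and extracts the multiplicity $2d$ by citing the Lehn--Sorger description of the cup product on $H^*(X^{[2d]})$, whereas you obtain $h_{[2d]}\cup q_{-2d}(1){\bf 1}=2d\,q_{-2d}(h){\bf 1}$ purely formally, from the projection formula for $\iota\colon P_{2d}\hookrightarrow X^{[2d]}$ together with the factorization of $HC\circ\iota$ through the small diagonal, whose pullback of the symmetric class is $2d\,h$. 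Since your $h_{[2d]}$ and the paper's $D_H$ are the same class in $H^2(X^{[2d]})$, the two arguments compute the same product; yours buys self-containedness (no appeal to Lehn--Sorger, no transversality or multiplicity analysis on the singular $P_{2d}$) and makes the origin of the two cancelling factors of $2d$ conceptually transparent---one from the small diagonal, one from $\deg X=2d$ in $\bP^g$---while the paper's is shorter given the literature. For the second identity, your Schubert line test and the paper's observation that $i^{-1}(D_H)$ is the locus of codimension-$2$ planes meeting a curve of class $2d\ell$ are essentially the same computation. Two points worth making explicit to be airtight: the factorization $HC\circ\iota=\delta\circ\supp$ as morphisms uses that $P_{2d}$ is reduced (Haiman's result, already invoked in Lemma~\ref{equidim-flex}), and the projection formula on the possibly singular $P_{2d}$ should be phrased via Borel--Moore homology or Gysin pushforward, which is in any case how the class $q_{-2d}(\alpha){\bf 1}=\iota_*\supp^*\alpha$ is defined.
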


\begin{proof} The first step is to verify the intersection
product $$q_{-1}(h)q_{-1}(1)^{2d-1}{\bf 1}\cdot q_{-2d}(1){\bf 1}   =2dq_{-2d}(h){\bf 1}$$
on $X^{[2d]}$ and the second step is to verify that
$i^*(q_{-1}(h)q_{-1}(1)^{2d-1}{\bf 1})=2d\sigma_1$. Then we can
apply Lemma \ref{first-formula}. The first step is
set-theoretically clear; the intersection multiplicity $2d$
follows quickly from the description of the ring structure on
$H^*(X^{[2d]})$ due to Lehn and Sorger \cite[Thm.~1.1 and Prop.~2.13]{lehn2003cup}.

To verify the second step, note that 
$q_{-1}(h)q_{-1}(1)^{2d-1}{\bf 1}$ is represented by
the divisor $D_H\subset X^{[2d]}$
of schemes whose support intersects $H\subset X$.
Thus $[i^{-1}(D_H)]$ represents $i^*(q_{-1}(h)q_{-1}(1)^{2d-1}{\bf 1})$.
But $i^{-1}(D_H)$ is simply the locus of codimension $2$
linear spaces passing through some point of $H$.
Since $[H]_{\mathbb{P}^g}=2d\ell$ where $\ell$
is the line class in $\mathbb{P}^g$, we conclude that
$[i^{-1}(D_H)]=2d\sigma_1$.
\end{proof}

Let $\mathcal{Z}\subset X^{[2d]}\times X$ denote the universal subscheme of length $2d$.
Let $\mathcal{Z}_G\subset G\times X$
denote the restriction of this subscheme to $G$ (along the inclusion $i$). Let $\pi_{X^{[2d]}}$ and $\pi$
denote the projections from $X^{[2d]}\times X$ and $G\times X$ to the first factor, respectively.
The {\it tautological bundle} $\mathcal{O}^{[2d]}\to X^{[2d]}$
is the pushforward $(\pi_{X^{[2d]}})_*\mathcal{O}_{\mathcal{Z}}$ and is a vector bundle
of rank $2d$ on $X^{[2d]}$. Let
$$\mathcal{O}^{[2d]}_G:=i^*\mathcal{O}^{[2d]}=\pi_*\mathcal{O}_{\mathcal{Z}_G}$$
denote the restriction of  this vector bundle to the Grassmannian $G$.

\begin{proposition}\label{int-lemma2}
We have $i^*q_{-2d}(1)=-2dc_{2d-1}(\mathcal{O}_G^{[2d]}).$ \end{proposition}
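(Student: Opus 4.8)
The plan is to carry out the entire comparison on the Grassmannian $G$, where both sides live in $H^{2(2d-1)}(G)$. For the left-hand side, Proposition~\ref{closed-immersion} and Lemma~\ref{equidim-flex} identify $i^*q_{-2d}(1)\mathbf 1 = i^*[P_{2d}]$ with the class $[\Phi]$ of the flex locus $\Phi = i^{-1}(P_{2d})\subset G$, a cycle of pure codimension $2d-1$. For the right-hand side I would first record that the unit sections give a subbundle $\mathcal{O}_G\hookrightarrow \mathcal{O}_G^{[2d]}$, so that $c(\mathcal{O}_G^{[2d]}) = c(Q_G)$ for the rank $2d-1$ quotient $Q_G := \mathcal{O}_G^{[2d]}/\mathcal{O}_G$; hence $c_{2d-1}(\mathcal{O}_G^{[2d]}) = c_{2d-1}(Q_G) = e(Q_G)$ is an Euler class, and the proposition becomes the assertion $[\Phi] = -2d\,e(Q_G)$. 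A useful preliminary reduction is that $H^{2(2d-1)}(G)$ is one-dimensional: since $G = \mathrm{Gr}(2,d+2)$, Schubert classes correspond to partitions in a $2\times d$ box, and the only such $\lambda$ with $|\lambda| = 2d-1$ is $\lambda = (d,d-1)$. Both classes are therefore multiples of the single generator $\sigma_{d,d-1}$, and the whole content of the proposition is the determination of the ratio.

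To fix that ratio without circularity (pairing with $\sigma_1$ merely returns $\deg R_{\mathrm{flex}}$, which is the quantity we are ultimately after), I would establish the universal identity $q_{-n}(1)\mathbf 1 = (-1)^{n-1}n\,c_{n-1}(\mathcal{O}^{[n]})$ on $X^{[n]}$ and then apply $i^*$. The base case $n=2$ is the classical relation $[P_2] = [E] = -2\,c_1(\mathcal{O}^{[2]})$ between the punctual boundary divisor and the first Chern class of the tautological bundle, which pins down both the sign $(-1)^{n-1}$ and the leading factor $n$. The inductive step is a calculation with the Nakajima correspondence together with the Lehn--Sorger description of the cup product on $H^*(X^{[n]})$ \cite{lehn2003cup}, propagated through the nested Hilbert schemes $X^{[n-1,n]}$ and the standard exact sequences relating $\mathcal{O}^{[n]}$ to $\mathcal{O}^{[n-1]}$ under adding a point. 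Specializing to $n=2d$ and restricting to $G$ then yields exactly $i^*q_{-2d}(1)\mathbf 1 = -2d\,c_{2d-1}(\mathcal{O}_G^{[2d]})$.

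The main obstacle is proving the universal identity with no surviving lower-order corrections. A naive expansion of $q_{-n}(1)\mathbf 1 - (-1)^{n-1}n\,c_{n-1}(\mathcal{O}^{[n]})$ produces Nakajima monomials $q_{-\lambda_1}(\alpha_1)\cdots q_{-\lambda_k}(\alpha_k)\mathbf 1$ with $k\ge 2$ factors, supported on loci of subschemes with at least two distinct support points; for instance $q_{-a}(h)q_{-b}(1)\mathbf 1$ with $a+b=2d$ pulls back to the dimension-one locus of pencils whose base scheme consists of two fat points, one constrained to a hyperplane section (for $d=2$, the bitangent lines with a tangent point on a plane section). These corrections do not vanish termwise on $G$ by dimension, so the identity must be genuinely class-level: one has to show that all such corrections cancel in cohomology. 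Here I expect the complete-intersection description $\mathcal{Z}_G = Z(s)$ for a section $s$ of $\mathcal{S}^\vee\boxtimes L$, together with its Koszul resolution and Grothendieck--Riemann--Roch for the projection $\pi\colon G\times X\to G$, to be indispensable: it computes $\mathrm{ch}(\mathcal{O}_G^{[2d]})$, hence $c_{2d-1}(\mathcal{O}_G^{[2d]})$, explicitly. An alternative route is to bypass the universal identity entirely and fix the coefficient by a single local computation of the multiplicity of $P_{2d}$ along $i(G)$ at a generic curvilinear flex scheme, the length $2d$ of that scheme being the source of the factor.
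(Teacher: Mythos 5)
Your reduction and your target identity are exactly right, and in fact your strategy coincides with the paper's: the paper also proves the proposition by establishing the universal identity $q_{-n}(1)\mathbf{1}=(-1)^{n-1}n\,c_{n-1}(\mathcal{O}^{[n]})$ on $X^{[n]}$ (your sign, coefficient, and base case $[P_2]=[E]=-2c_1(\mathcal{O}^{[2]})$ are all correct) and then pulling back along $i$, using that Chern classes commute with pullback. But the proof of that universal identity is the entire mathematical content of the proposition, and in your write-up it is missing: the inductive step is only gestured at, and you yourself flag as the ``main obstacle'' the need to show that all multi-factor Nakajima monomials of bidegree $(2n-2,n)$ --- such as $q_{-a}(h)q_{-b}(1)\mathbf{1}$, which do have the right bidegree and do not pull back to zero on $G$ --- cancel out of $c_{n-1}(\mathcal{O}^{[n]})$; you never resolve this. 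The missing ingredient is the theorem of Ellingsrud--G\"ottsche--Lehn \cite[Thm.~12.4]{ellingsrud2000hilbert}, which the paper cites: applied to the trivial line bundle it gives
$$\sum_n c(\mathcal{O}^{[n]}) = \exp\Bigl(\textstyle\sum_{m\geq 1}\tfrac{(-1)^{m-1}}{m}\,q_{-m}(c(\mathcal{O}))\Bigr)\mathbf{1},$$
and since $c(\mathcal{O})=1$ the right-hand side involves only the classes $q_{-m}(1)$, of bidegree $(2m-2,m)$. A product of $k$ such operators landing in $X^{[n]}$ has cohomological degree $2n-2k$, so the unique term in degree $2n-2$ is the linear one, $\tfrac{(-1)^{n-1}}{n}q_{-n}(1)\mathbf{1}$. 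This single citation disposes of precisely the cancellation you could not establish; without it (or an equivalent result on tautological classes) your induction does not go through.

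Neither of your fallback routes closes this gap. The Koszul resolution (Proposition \ref{koszul}) plus Grothendieck--Riemann--Roch computes only the right-hand side $c_{2d-1}(\mathcal{O}_G^{[2d]})$ --- this is exactly what the paper does afterwards (Corollary \ref{tot-chern}) in order to evaluate $n_d$ in Theorem \ref{final} --- and it carries no information whatsoever about $i^*q_{-2d}(1)\mathbf{1}=i^*[P_{2d}]$, since $P_{2d}$ never enters that computation; so it cannot be ``indispensable'' for an identity relating the two sides. The ``alternative route'' of fixing the coefficient by a local multiplicity computation is circular by your own opening observation: the Schubert class dual to $\sigma_{d,d-1}$ is $\sigma_1$, so determining the coefficient of $i^*[P_{2d}]$ on $\sigma_{d,d-1}$ is literally the same problem as computing $\sigma_1\cdot i^*[P_{2d}]=\deg R_{\rm flex}$, the quantity that Proposition \ref{int-lemma} and Theorem \ref{final} are designed to extract \emph{from} this proposition; moreover, the generic intersection multiplicity of $P_{2d}$ along $i(G)$ is not $2d$ (the paper's Example \ref{deg4ex} exhibits components of multiplicity $1$ and $2$), and knowing it locally would still not identify a cohomology class. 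In short: correct target, correct reductions, but the key step is a genuine gap, which the paper closes with one reference.
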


\begin{proof} Applying \cite[Thm.~12.4]{ellingsrud2000hilbert} 
to the line bundle $\mathcal{O}$ gives the formula
$$\sum_n c(\mathcal{O}^{[n]}) = {\rm exp}
\left(\textstyle\sum_{m\geq 1} \frac{(-1)^{m-1}}{m}q_{-m}(c(\mathcal{O}))\right).$$
Note $c(\mathcal{O})=1$ and that $q_{-m}(1)$ has bidegree $(2m-2, m)$.
So the only term on the right-hand side landing in $H^{2n-2}(X^{[n]})$
is $(-1)^{n-1}n^{-1}q_{-n}(1)$. We conclude \begin{align*}
i^*q_{-2d}(1)&=-2di^*c_{2d-1}(\mathcal{O}^{[2d]})=-2dc_{2d-1}(\mathcal{O}_G^{[2d]}) \end{align*}
which follows via commutativity of taking Chern classes with pullback.  \end{proof}

Let $Q$ denote the rank $2$ universal quotient bundle on $G$.
To compute the Chern class $c_{2d-1}(\mathcal{O}_G^{[2d]})$ we make
use of the following exact sequence:

\begin{proposition} \label{koszul}
There is a resolution of $\mathcal{O}_{\mathcal{Z}_G}$ by vector bundles on $G\times X$:
$$0\to \det(Q^*)\boxtimes (-2L) \to Q^*\boxtimes (-L)\to \mathcal{O} \to \mathcal{O}_{\mathcal{Z}_G} \to 0.$$
\end{proposition}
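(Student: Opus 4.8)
The plan is to exhibit $\mathcal{O}_{\mathcal{Z}_G}$ as the Koszul resolution attached to a regular section of the rank $2$ bundle $Q\boxtimes L$ on $G\times X$.

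First I would pin down the geometric meaning of $Q^*$. Since $G=\mathrm{Gr}(g-1,g+1)$ parametrizes codimension $2$ subspaces $U\subset H^0(X,L)^*$, dualizing the tautological sequence $0\to S\to H^0(X,L)^*\otimes \mathcal{O}_G\to Q\to 0$ exhibits $Q^*$ as the rank $2$ subbundle of $H^0(X,L)\otimes\mathcal{O}_G$ whose fiber at a point $V$ is the pencil $V\subset H^0(X,L)$ of sections cutting out $\mathbb{P}V$. Composing the inclusion $Q^*\boxtimes\mathcal{O}_X\hookrightarrow H^0(X,L)\otimes\mathcal{O}_{G\times X}$ with the pullback of the evaluation map $H^0(X,L)\otimes\mathcal{O}_X\to L$ yields a morphism $Q^*\boxtimes(-L)\to\mathcal{O}$, equivalently a global section $\sigma$ of $Q\boxtimes L$. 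By construction the scheme-theoretic zero locus of $\sigma$ is the universal base locus, namely $\mathcal{Z}_G$.

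Next I would verify that $\sigma$ is a \emph{regular} section, i.e. that $\mathcal{Z}_G$ has codimension $2$ (the rank of $Q\boxtimes L$) in $G\times X$. This is exactly where the hypothesis $(X,L)\in T$ enters: since no pencil has a base curve, the base locus $\mathbb{P}V\cap X$ is $0$-dimensional for every $V$, so $\mathcal{Z}_G$ meets each slice $\{V\}\times X$ in codimension $2$, and as $G$ is smooth it follows that $\mathcal{Z}_G$ has codimension $2$ throughout $G\times X$. A section of a rank $2$ vector bundle whose zero scheme has the expected codimension $2$ is regular, so its Koszul complex is exact and resolves $\mathcal{O}_{\mathcal{Z}_G}$.

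Finally I would identify the terms. For $E=Q\boxtimes L$ of rank $2$, the Koszul complex reads $0\to\wedge^2 E^*\to E^*\to\mathcal{O}\to\mathcal{O}_{\mathcal{Z}_G}\to 0$, with $E^*=Q^*\boxtimes(-L)$; and since $Q$ has rank $2$ we compute $\wedge^2 E=\det(Q\boxtimes L)=\det Q\boxtimes(2L)$, hence $\wedge^2 E^*=\det(Q^*)\boxtimes(-2L)$. This is precisely the claimed resolution. I expect the only genuinely substantive step to be the regularity of $\sigma$ (equivalently, that $\mathcal{Z}_G$ is a codimension $2$ local complete intersection rather than merely a set-theoretic intersection), which relies essentially on $(X,L)\in T$; the determinant bookkeeping for the rank $2$ twist is routine.
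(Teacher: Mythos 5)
Your proposal is correct and is essentially the paper's argument: both identify the displayed sequence as the Koszul resolution of the universal base locus $\mathcal{Z}_G$, via the same key identification of the fiber of $Q^*$ at a point of $G$ with the pencil $\C s_1\oplus \C s_2$ of sections cutting out the corresponding codimension $2$ linear space. The only difference is presentational: you run the Koszul argument globally (a section of $Q\boxtimes L$ whose zero scheme has the expected codimension $2$, with regularity supplied by the no-base-curve hypothesis), whereas the paper asserts exactness by restricting to the fibers of $\pi\colon G\times X\to G$; your version usefully makes explicit the regularity/exactness step, and its reliance on $(X,L)\in T$, which the paper leaves implicit.
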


\begin{proof} This exact sequence is simply the 
global version of the Koszul resolution of $\mathcal{O}_{X\cap \mathbb{P}V}$ 
where $\mathbb{P}V=\{x\in \mathbb{P}^g\,\big{|}\, s_1(x)=s_2(x)=0\}$
is a codimension $2$ linear space:
$$0\to (s_1s_2)\to (s_1)\oplus (s_2)\to \mathcal{O}_X \to \mathcal{O}_{X\cap \mathbb{P}V}\to 0.$$
On a given fiber of $\pi$ the restrictions of $\det(Q^*)\boxtimes (-2L)$
and $Q^*\boxtimes (-L)$ are $(s_1s_2)$ and $(s_1)\oplus (s_2)$ respectively,
because $Q^*=\C s_1\oplus \C s_2$.
\end{proof}

Let $r_1$ and $r_2$ denote the Chern roots of $Q$. 

\begin{proposition} ${\rm ch}(\mathcal{O}_G^{[2d]}) = 2-(d+2)e^{-r_1}-(d+2)e^{-r_2}+(4d+2)e^{-r_1-r_2}$. \end{proposition}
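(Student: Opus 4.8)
The plan is to compute the Chern character of $\mathcal{O}_G^{[2d]}$ directly from the Koszul resolution in Proposition \ref{koszul}, using the additivity of the Chern character on exact sequences. Since the Chern character is additive, the resolution
$$0\to \det(Q^*)\boxtimes (-2L) \to Q^*\boxtimes (-L)\to \mathcal{O} \to \mathcal{O}_{\mathcal{Z}_G} \to 0$$
on $G\times X$ yields
$$\mathrm{ch}(\mathcal{O}_{\mathcal{Z}_G}) = \mathrm{ch}(\mathcal{O}) - \mathrm{ch}\bigl(Q^*\boxtimes(-L)\bigr) + \mathrm{ch}\bigl(\det(Q^*)\boxtimes(-2L)\bigr)$$
in $H^*(G\times X)$. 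Since $\mathrm{ch}$ is multiplicative under external tensor product, each term factors as a product of a class pulled back from $G$ and a class pulled back from $X$.

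The key step is then to apply the Grothendieck--Riemann--Roch theorem (or, since $\pi\colon G\times X\to G$ is a smooth projective morphism with fiber the K3 surface $X$, simply integrate over the fiber) to pass from $\mathrm{ch}(\mathcal{O}_{\mathcal{Z}_G})$ on $G\times X$ to $\mathrm{ch}(\mathcal{O}_G^{[2d]}) = \mathrm{ch}(\pi_*\mathcal{O}_{\mathcal{Z}_G})$ on $G$. Concretely, I would write
$$\mathrm{ch}(\mathcal{O}_G^{[2d]}) = \pi_*\bigl(\mathrm{ch}(\mathcal{O}_{\mathcal{Z}_G})\cdot \mathrm{td}(T_{G\times X/G})\bigr),$$
where the relative tangent bundle is just $\mathrm{pr}_X^* T_X$. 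For each of the three factored terms, the $G$-part ($1$, $\mathrm{ch}(Q^*)$, or $\mathrm{ch}(\det Q^*)$) pulls out of the fiber integral, and what remains is the integral over the K3 surface $X$ of the corresponding $X$-part twisted by $\mathrm{td}(X)$. The relevant fiber integrals are $\int_X \mathrm{td}(X) = \chi(\mathcal{O}_X) = 2$, $\int_X \mathrm{ch}(-L)\,\mathrm{td}(X) = \chi(-L)$, and $\int_X \mathrm{ch}(-2L)\,\mathrm{td}(X) = \chi(-2L)$. Using $L^2 = 2d$ and Riemann--Roch on the K3 surface $X$, namely $\chi(kL) = 2 + \tfrac{1}{2}(kL)^2 = 2 + k^2 d$, these evaluate to $2$, $d+2$, and $4d+2$ respectively.

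Assembling the pieces gives
$$\mathrm{ch}(\mathcal{O}_G^{[2d]}) = 2\cdot 1 - (d+2)\,\mathrm{ch}(Q^*) + (4d+2)\,\mathrm{ch}(\det Q^*),$$
and since $Q$ has Chern roots $r_1, r_2$, we have $\mathrm{ch}(Q^*) = e^{-r_1}+e^{-r_2}$ and $\mathrm{ch}(\det Q^*) = e^{-r_1-r_2}$, yielding the claimed formula. I expect the main subtlety to be bookkeeping in the fiber integration: one must verify that only the top-degree ($H^4(X)$) components of the $X$-factors contribute to the pushforward to $G$ in each degree, and confirm that the Euler-characteristic values $\chi(-L) = d+2$ and $\chi(-2L) = 4d+2$ are correctly normalized (in particular that higher cohomology vanishing or the sign conventions in GRR do not introduce spurious terms). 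This is routine but is where an error would most easily creep in; everything else is a formal consequence of additivity and multiplicativity of $\mathrm{ch}$.
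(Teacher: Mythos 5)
Your proof is correct, but it takes a genuinely different route from the paper's. Both arguments start from the Koszul resolution of Proposition \ref{koszul}, but the paper never invokes Grothendieck--Riemann--Roch: instead it computes all the derived pushforwards $R^i\pi_*$ of each term in the resolution explicitly (finiteness of $\mathcal{Z}_G\to G$ for the first term, the Hodge numbers of the K3 for $\mathcal{O}$, and relative Serre duality for the two negative twists, which have only $R^2$), obtaining the four-term identity $[\mathcal{O}_G^{[2d]}]-2[\mathcal{O}]+(d+2)[Q^*]-(4d+2)[\det(Q^*)]=0$ in $K(G)$, and only then applies ${\rm ch}$. You instead apply ${\rm ch}$ upstairs on $G\times X$ and push forward in cohomology, so that the coefficients $2$, $d+2$, $4d+2$ arise as the Euler characteristics $\chi(\mathcal{O}_X)$, $\chi(-L)$, $\chi(-2L)$ via Hirzebruch--Riemann--Roch on the fiber, rather than as ranks of specific cohomology groups; this is slightly more robust, since you need only $\chi(kL)=2+k^2d$ and no vanishing statements for the twists (the paper's route implicitly uses $h^1(L)=h^2(L)=0$ to know $h^0(X,L)=d+2$, or must quote that fact). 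The one step you leave implicit is the identification $\pi_![\mathcal{O}_{\mathcal{Z}_G}]=[\mathcal{O}_G^{[2d]}]$ in $K(G)$: GRR computes the Chern character of the \emph{derived} pushforward, so you need $R^i\pi_*\mathcal{O}_{\mathcal{Z}_G}=0$ for $i>0$, which holds because $\mathcal{Z}_G\to G$ is finite (this is exactly where the standing assumption $(X,L)\in T$, i.e.\ no pencil has a base curve, enters); you flag this as a possible subtlety at the end but should state the finiteness explicitly. Also, your worry about ``only top-degree components contributing'' is not a real issue: pushforward along $G\times X\to G$ is integration over the fiber and automatically kills all K\"unneth components except those in $H^4(X)$.
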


\begin{proof} Consider the (derived) pushforward $R\pi_*$
of the exact sequence of Proposition \ref{koszul}. Computing the
derived pushforward sheaves of each term gives
$$R^i\pi_* \mathcal{O}_{\mathcal{Z}_G}=\twopartdef{\mathcal{O}_G^{[2d]}}{i=0}{0}{i>0} \hspace{20pt}
R^i\pi_* \mathcal{O}=\twopartdef{\mathcal{O}}{i=0,2}{0}{i=1}$$
$$R^i\pi_*(Q^*\boxtimes (-L))=\twopartdef{0}{i=0,1}{Q^*\otimes H^0(X,L)^*}{i=2}$$
$$R^i\pi_*(\det(Q^*)\boxtimes (-2L))=\twopartdef{0}{i=0,1}{\det(Q^*)\otimes H^0(X,2L)^*}{i=2.}$$
The first equation follows from the definition of $\mathcal{O}_G^{[2d]}$ and that
$\mathcal{Z}_G$ is finite over $G$, and the last three equations all follows from relative Serre
duality applied to $\pi$. From these computations, and the fact that 
$h^0(X,L)=d+2$ and $h^0(X,2L)=4d+2$, we get the following equality in the $K$-group of $G$:
$$[\mathcal{O}_G^{[2d]}]-2[\mathcal{O}]+(d+2)[Q^*]-(4d+2)[\det(Q^*)]=0.$$
Since the Chern character ${\rm ch}$ is a homomorphism from $K$-theory to cohomology, the proposition
follows from the equalities
${\rm ch}(\mathcal{O})=1$, ${\rm ch}(Q^*)=e^{-r_1}+e^{-r_2}$, ${\rm ch}(\det(Q^*))=e^{-r_1-r_2}$. \end{proof}

\begin{corollary}\label{tot-chern} The total Chern character of $\mathcal{O}_G^{[2d]}$ is $$c(\mathcal{O}_G^{[2d]})=\frac{(1-r_1-r_2)^{4d+2}}{(1-r_1)^{d+2}(1-r_2)^{d+2}} = \frac{(1-\sigma_1)^{4d+2}}{(1-\sigma_1+\sigma_2)^{d+2}} .$$ \end{corollary}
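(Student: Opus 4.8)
The plan is to pass from the $K$-theory relation
$$[\mathcal{O}_G^{[2d]}]=2[\mathcal{O}]-(d+2)[Q^*]+(4d+2)[\det(Q^*)]$$
established in the preceding proposition to the total Chern class, by exploiting the fact that the total Chern class $c$ is a homomorphism from the additive group $(K(G),+)$ to the multiplicative group of units $1+H^{>0}(G)$ in cohomology. Applying $c$ to this relation converts the integer-coefficient sum into a product of Chern-class powers, with the negative coefficient becoming an inverse:
$$c(\mathcal{O}_G^{[2d]})=c(\mathcal{O})^{2}\,c(Q^*)^{-(d+2)}\,c(\det Q^*)^{4d+2}.$$

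First I would record the three elementary Chern-class computations. Since $Q^*$ has Chern roots $-r_1,-r_2$, we have $c(\mathcal{O})=1$, $c(Q^*)=(1-r_1)(1-r_2)$, and $c(\det Q^*)=1-r_1-r_2$, the last because $c_1(\det Q^*)=c_1(Q^*)=-r_1-r_2$. Substituting these into the displayed product immediately yields
$$c(\mathcal{O}_G^{[2d]})=\frac{(1-r_1-r_2)^{4d+2}}{(1-r_1)^{d+2}(1-r_2)^{d+2}},$$
which is the first claimed expression.

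For the second equality I would rewrite everything in terms of the Schubert classes, using that $\sigma_1=c_1(Q)=r_1+r_2$ and $\sigma_2=c_2(Q)=r_1r_2$ are the elementary symmetric functions of the Chern roots on $G=\mathrm{Gr}(g-1,g+1)$. Then $1-r_1-r_2=1-\sigma_1$, while $(1-r_1)^{d+2}(1-r_2)^{d+2}=\big((1-r_1)(1-r_2)\big)^{d+2}=(1-\sigma_1+\sigma_2)^{d+2}$ since $(1-r_1)(1-r_2)=1-(r_1+r_2)+r_1r_2$. This converts the numerator and denominator directly into $(1-\sigma_1)^{4d+2}$ and $(1-\sigma_1+\sigma_2)^{d+2}$, giving the desired form.

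There is no serious obstacle here; the computation is purely formal once the preceding proposition is in hand. The only points requiring care are the bookkeeping of signs and exponents when passing through the homomorphism property of $c$, and matching the Chern-root conventions so that $\sigma_1,\sigma_2$ are correctly identified with the elementary symmetric functions of $r_1,r_2$. (I note that the statement reads ``total Chern character'' but the displayed quantity is the total Chern \emph{class}; the proof establishes the class formula as written.)
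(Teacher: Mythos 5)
Your proof is correct and is essentially the paper's argument: the paper invokes the splitting principle with the ``virtual Chern roots'' $\{\underbrace{-r_1-r_2}_{4d+2},0,0\}-\{\underbrace{-r_1}_{d+2},\underbrace{-r_2}_{d+2}\}$, which is precisely your use of the homomorphism $c\colon K(G)\to 1+H^{>0}(G)$ applied to the $K$-theory relation from the preceding proposition, followed by the same substitution $r_1+r_2=\sigma_1$, $r_1r_2=\sigma_2$. You also correctly flag that ``total Chern character'' in the statement should read ``total Chern class.''
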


\begin{proof} Since $\mathcal{O}_G^{[2d]}$ is a vector bundle, we can compute
the total Chern character using the splitting principle and the set of ``virtual Chern roots"
$$\{\underbrace{-r_1-r_2}_{4d+2}, 0,0\}-\{\underbrace{-r_1}_{d+2},\underbrace{-r_2}_{d+2}\}.$$
The theorem then follows from the equalities $r_1+r_2=\sigma_1$ and $r_1r_2=\sigma_2$.
\end{proof}

\begin{remark} Let
$X\subset \mathbb{P}^g$ be Cohen-Macaulary
of degree $d$ and codimension $r$.
If $X$ intersects any
$r$-plane in $\mathbb{P}^g$
properly, there is a map ${\rm Gr}(r+1,g+1)\to X^{[d]}$
which sends an $r$-plane to its intersection with $X$. 
There is a rank $d$ tautological vector
bundle $\mathcal{O}^{[d]}\to X^{[d]}$ and 
the Chern classes of its pullback to ${\rm Gr}(r+1,g+1)$
can be computed in the same manner 
as above, via the Koszul resolution.
  \end{remark}

\begin{theorem}\label{final} Let $X\subset \mathbb{P}^g$ be a smooth K3 surface
embedded by a primitive ample line bundle $L$ of square $L^2=2d=2g-2$,
for which no pencil in $|L|$ has a base curve.
Then, the flex divisor satisfies $R_{\rm flex}\in |n_dL|$
where $$n_d = \frac{(2d)!(2d+1)!}{d!^2(d+1)!^2}.$$ \end{theorem}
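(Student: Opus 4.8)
The plan is to reduce the computation of $n_d$ entirely to the Grassmannian $G = {\rm Gr}(g-1,g+1)$, where Schubert calculus makes everything explicit. By Proposition \ref{int-lemma2} the degree of the flex divisor equals
\[
{\rm deg}(R_{\rm flex}) = \sigma_1 \cdot i^*q_{-2d}(1){\bf 1} = -2d\,\sigma_1 \cdot c_{2d-1}(\mathcal{O}_G^{[2d]}),
\]
combining Proposition \ref{int-lemma} and Proposition \ref{int-lemma2}. So the entire problem becomes extracting a single Chern number of the pulled-back tautological bundle. Corollary \ref{tot-chern} already packages all the Chern classes into the generating series
\[
c(\mathcal{O}_G^{[2d]}) = \frac{(1-\sigma_1)^{4d+2}}{(1-\sigma_1+\sigma_2)^{d+2}},
\]
so the remaining work is purely formal manipulation in the cohomology ring $H^*(G)$.

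First I would recall the structure of $H^*(G)$ for $G = {\rm Gr}(g-1,g+1)$, the Grassmannian of codimension-$2$ subspaces of a $(g+1)$-dimensional space, equivalently ${\rm Gr}(2,g+1)$ via the quotient bundle $Q$ of rank $2$. Its cohomology is generated by $\sigma_1 = c_1(Q)$ and $\sigma_2 = c_2(Q)$ subject to the relations coming from the dual Schubert classes; in particular the top class in degree $2\dim G = 2\cdot 2(g-1) = 4(g-1) = 8d$ integrates to $1$, and the only two monomials of top degree are governed by the Pieri/Giambelli rules, giving $\int_G \sigma_1^{4d} = C(2d)$ (a ballot number) and the other top intersection numbers computable by the hook-length/Catalan combinatorics for ${\rm Gr}(2,n)$. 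The degree we want, $-2d\,\sigma_1\cdot c_{2d-1}(\mathcal{O}_G^{[2d]})$, lives in cohomological degree $2 + (4d-2) = 4d$, which is \emph{not} the top degree $8d$, so I would multiply $c_{2d-1}(\mathcal{O}_G^{[2d]})$ further against a complementary Schubert class to reach the fundamental class—or, more cleanly, expand the generating function and read off $c_{2d-1}$ as a polynomial in $\sigma_1,\sigma_2$, then cap with $\sigma_1$ and evaluate against $[G]$.

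The cleanest route is to write $\sigma_1 = r_1 + r_2$ and $\sigma_2 = r_1 r_2$ with Chern roots $r_1,r_2$, turn the evaluation $\int_G(-)$ into a symmetric-function residue, and extract the coefficient of the top monomial. Concretely, for ${\rm Gr}(2,g+1)$ the integral of a symmetric polynomial $f(r_1,r_2)$ of the correct degree is a Jacobi–Trudi / contour-integral expression
\[
\int_G f(r_1,r_2) = \big[\text{coefficient extraction}\big]\ \oint\oint \frac{f(r_1,r_2)\,(r_1-r_2)}{(r_1 r_2)^{\,g}}\,\frac{dr_1\,dr_2}{(2\pi i)^2},
\]
applied to $f = -2d\,(r_1+r_2)\,c_{2d-1}$, where $c_{2d-1}$ is read from Corollary \ref{tot-chern}. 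This converts $n_d$ into a double residue of $(1-r_1-r_2)^{4d+2}(1-r_1)^{-(d+2)}(1-r_2)^{-(d+2)}$ times explicit factors, which I expect to evaluate to a product of binomials collapsing to $(2d+1)C(d)^2$. The main obstacle is this final identity: showing that the resulting double-residue / iterated-binomial sum simplifies exactly to $\frac{(2d)!(2d+1)!}{d!^2(d+1)!^2}$. This is a concrete hypergeometric-type identity, and I would prove it either by recognizing the sum as a Vandermonde–Chu or Saalschütz evaluation, or by verifying it satisfies the same first-order recurrence in $d$ as the target product (both sides being determined by a ratio $n_{d+1}/n_d$ that one checks matches), with the base case $n_1 = 3$ from Example \ref{deg2ex}. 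Everything before that step is bookkeeping in Schubert calculus; the genuine content is the closed-form evaluation of the Chern-number integral.
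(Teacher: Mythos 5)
Your skeleton is the same as the paper's: combine Propositions \ref{int-lemma} and \ref{int-lemma2} with Corollary \ref{tot-chern} to reduce everything to evaluating $-\sigma_1\cdot c_{2d-1}(\mathcal{O}_G^{[2d]})$ by Schubert calculus on $G$, then prove a closed-form binomial identity (the paper does this by expanding $\bigl[\sigma_1(1+\sigma_1)^{4d+2}(1+\sigma_1+\sigma_2)^{-(d+2)}\bigr]_{\rm top}$, evaluating monomials via $\sigma_1^m\sigma_2^n = m!/((m/2)!\,(m/2+1)!)$ for $m+2n=2d$, and invoking automated hypergeometric summation). However, your write-up contains concrete numerical errors that would derail the computation if followed literally. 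Since $L^2=2d=2g-2$ gives $g-1=d$, the Grassmannian $G={\rm Gr}(g-1,g+1)$ has $\dim_{\C}G=2(g-1)=2d$, so its top cohomological degree is $4d$, not $8d$ (you substituted $g-1=2d$). Consequently $\sigma_1\cdot c_{2d-1}(\mathcal{O}_G^{[2d]})$, which sits in degree $4d$, \emph{is} already a top-degree class; your proposed step of multiplying against a complementary Schubert class to reach degree $8d$ would land in a vanishing cohomology group and return $0$. The same error produces the wrong intersection number: the correct statement is $\int_G \sigma_1^{2d}=C(d)$, not $\int_G\sigma_1^{4d}=C(2d)$. (Your hedged alternative---expand the series, cap with $\sigma_1$, evaluate against $[G]$---is the correct route and is exactly what the paper does.)

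Second, you conflate ${\rm deg}(R_{\rm flex})$ with $n_d$. Since $R_{\rm flex}\in|n_dL|$ means ${\rm deg}(R_{\rm flex})=n_d\,L^2=2d\,n_d$, the quantity you compute, $-2d\,\sigma_1\cdot c_{2d-1}(\mathcal{O}_G^{[2d]})$, equals $2d\,n_d$; it is $n_d=-\sigma_1\cdot c_{2d-1}(\mathcal{O}_G^{[2d]})$ that should evaluate to $(2d+1)C(d)^2$. As written, the identity you set out to verify is off by a factor of $2d$ and would already fail against your own base case $n_1=3$ (for $d=1$ your residue equals $6$, not $3$). Finally, a caution on your fallback argument: verifying that the resulting double binomial sum satisfies a first-order recurrence in $d$ is not a routine check---for a sum like the paper's
$$\sum_{j=0}^d\sum_{\ell=1}^{d-j}(-1)^{j+1}\binom{4d+2}{j}\binom{3d-j}{2d+\ell}\binom{2d+\ell}{2\ell-1}\binom{2\ell}{\ell}\frac{1}{\ell+1},$$
establishing such a recurrence is precisely the content of creative telescoping (Zeilberger's algorithm), i.e., the same ``automated choose identity verification'' the paper appeals to. So on the genuinely hard final step your proposal is at the same level of rigor as the paper, but the Schubert-calculus bookkeeping preceding it needs the corrections above before the evaluation can come out right.
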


\begin{proof} By Propositions \ref{int-lemma} and \ref{int-lemma2}, 
we have the formula 
$$n_d=-\sigma_1\cdot c_{2d-1}(\mathcal{O}_G^{[2d]}).$$ From the formula of
Corollary \ref{tot-chern} for
$c(\mathcal{O}_G^{[2d]})$, plus the fact that the minus signs cancel
in any contribution to top degree, we conclude
$$n_d = \left[\sigma_1\cdot
\frac{(1+\sigma_1)^{4d+2}}{(1+\sigma_1+\sigma_2)^{d+2}}\right]_{\rm top}.$$ 
The Pieri and Giambelli formulae imply that
$$\sigma_1^m\cdot \sigma_2^n = \frac{m!}{(m/2)!(m/2+1)!}$$
when $m+2n=2d$ add up to the correct dimension to give a top class on $G$.
After performing binomial expansion in $\sigma_1$ then $\sigma_2$,
collecting terms of top degree,
and plugging in the above formula, we get the ugly expression
$$n_d=
\sum_{j=0}^d\sum_{\ell=1}^{d-j} (-1)^{j+1}{4d+2\choose j} 
{3d-j \choose 2d+\ell}{2d+\ell \choose 2\ell-1}
{2\ell \choose \ell}\frac{1}{\ell+1}.$$ Applying
automated choose identity verification gives the result.
\end{proof}

\begin{example}\label{deg2ex} Let $(X,L)$ be any ADE K3 surface of degree $L^2=2$.
The linear system $|L|$ defines a $2:1$ morphism from $X$ onto either
$\mathbb{P}^2$ or $\mathbb{F}_4^0$ and $R_{\rm flex}$ is naturally
the ramification divisor of this map.
The double cover of $\mathbb P^2$ is branched in a sextic $B$. One has
$R_{\rm flex}^2 = B^2/2 = 18=(3L)^2$, so $n_1=3$.
\end{example}

\begin{example}\label{deg4ex}
For a quartic surface, one can compute the flex divisor directly from
the definition. Here are some results:

The Fermat quartic
$X=V(x_0^4+x_1^4+x_2^4+x_3^4)\subset \mathbb{P}^3$
contains $48$ lines. Each line appears
with multiplicity one in $R_{\rm flex}$. The intersections of $X$ with
the coordinate hyperplanes $x_i=0$ appear with multiplicity $2$ in $R_{\rm flex}$. So
$R_{\rm flex}$ is cut out by $(x_0^4+x_1^4)(x_0^4+x_2^4)(x_0^4+x_3^4)x_0^2x_1^2x_2^2x_3^2.$

 The maximal number of $64$ lines on a smooth quartic surface
 is realized by the Schur quartic $X=V(x_0^4-x_0x_1^3+x_2x_3^3-x_3^4)\subset \mathbb{P}^3$.
These lines come in two types. The first type, of which there are
$16$,
are the lines joining the $4+4$ points lying on the skew lines $V(x_0,x_1)$,
$V(x_2,x_3)$. They
appear
in $R_{\rm flex}$ with multiplicity two, while the remaining $48$
lines of the second type appear with multiplicity one. So $R_{\rm flex}$ consists
only of lines. Thus $X$ has no ``flex points" in the naive sense.
 \end{example}

\begin{remark} Based on the $d=1$ case,
the authors hoped that $R_{\rm flex}$ would be a canonical choice
of polarizing divisor living in a reasonably small multiple of the polarization class,
at least compared to the rational curve divisor $R_{\rm rc}$. But in fact, the formula
of Theorem \ref{final} grows {\it significantly faster} than the
Yau-Zaslow formula, with the switch occurring between
$d=8$ and $d=9$. Asymptotically, $n_d\sim 2^{4d+1}/\pi d^2$
while Yau-Zaslow$_d$ $\sim e^{4\pi \sqrt{d}}/\sqrt{2}d^{27/4}$. \end{remark}

\renewcommand\MR[1]{}
\bibliographystyle{amsalpha} \bibliography{flex}

\end{document}